%% file: main.tex
\documentclass[11pt,colorlinks=true,allcolors=OliveGreen]{article}
\usepackage[english]{babel}
\usepackage{setspace}
\usepackage[letterpaper,top=2cm,bottom=2cm,left=2cm,right=2cm,marginparwidth=1.75cm]{geometry}
\usepackage{amssymb}
\usepackage{amsmath}
\usepackage{amsfonts}
\usepackage{amsthm}
\usepackage{mathtools}
\usepackage{mathrsfs}
\usepackage{bm}
\usepackage{bbm}          
\usepackage{dsfont}
\usepackage{cancel}
\usepackage{physics}      
\usepackage{stmaryrd}     
\usepackage{tensor}
\usepackage{systeme}
\usepackage{upgreek}
\usepackage{nicefrac}
\usepackage{extarrows}
\usepackage{thmtools}
\usepackage{stackengine}
\stackMath
\usepackage{esint}
\usepackage{calrsfs}

\usepackage{array}
\newcolumntype{C}{>{\centering\arraybackslash}p{2.5cm}}
\usepackage{tabularx}
\usepackage{longtable}
\usepackage{colortbl}
\usepackage{booktabs}
\usepackage{multirow}
\usepackage{nicematrix}
\usepackage{enumitem}
\usepackage{siunitx}
\usepackage{float}
\usepackage{placeins}
\usepackage{graphicx}
\graphicspath{{Images/}}
\usepackage[export]{adjustbox}
\usepackage{overpic}
\usepackage{eso-pic}
\usepackage{transparent}
\usepackage{subcaption}
\usepackage[labelfont=bf,labelsep=space]{caption}

\usepackage{tikz}
\usepackage{pgfplots}
\pgfplotsset{compat=newest}
\usepgfplotslibrary{units}
\usetikzlibrary{
  pgfplots.groupplots,
  external,
  patterns, tikzmark,
  matrix, decorations.pathreplacing, calc,
  arrows.meta, positioning, shapes.geometric
}
\usepackage{hf-tikz}
\pgfkeys{/pgfplots/legend entry/.code=\addlegendentry{#1}}

\usepackage{algorithm}
\usepackage{algpseudocode}
\usepackage{csvsimple}
\usepackage{comment}
\usepackage{pdfpages}
\usepackage{lipsum}
\usepackage{xargs}
\usepackage{abstract}

\usepackage{authblk}

\usepackage{orcidlink}

\usepackage[dvipsnames,x11names]{xcolor}
\usepackage{hyperref}

\usepackage[overload]{empheq}

\newcommand{\faces}{\mathcal{F}_h}

\newcommand{\mc}[1]{\textcolor{purple}{[\textbf{Mattia:} #1]}}
\newcommand{\pa}[1]{\textcolor{magenta}{[\textbf{Paola:} #1]}}

\newcommand{\eremk}{\hbox{}\hfill\rule{0.8ex}{0.8ex}}

\newcommand{\jump}[1]{\llbracket #1 \rrbracket}
\newcommand{\avg}[1]{\{\!\!\{ #1 \}\!\!\}}
\newcommand{\dgnorm}[1]{\|#1\|_\mathrm{DG}}
\newcommand{\tdgnorm}[1]{|\;\!\!|\;\!\!|#1|\;\!\!|\;\!\!|_\mathrm{DG}}
\newcommand{\enorm}[1]{\|#1\|_\varepsilon}

\newcommand{\ds}{\mathrm{d}s}
\newcommand{\Wh}{W_h^\mathrm{DG}}
\newcommand{\bea}{\begin{eqnarray}}
\newcommand{\eea}{\end{eqnarray}}

\newtheorem{theorem}{Theorem}[section]

\theoremstyle{definition}

\newtheorem{proposition}[theorem]{Proposition}
\newtheorem{assumption}{Assumption}[section]
\theoremstyle{remark}
\newtheorem{remark}[theorem]{Remark}

\tikzset{font={\fontsize{15pt}{12}\selectfont}}

\title{A stability-preserving polytopal discontinuous Galerkin method for the Fisher-Kolmogorov model with applications to neurodegenerative disease modelling\footnote{\textbf{Fundings}: This work is partially funded by the European Union (ERC SyG, NEMESIS, project number 101115663). Views and opinions expressed are, however, those of the authors only and do not necessarily reflect those of the European Union or the European Research Council Executive Agency. Neither the European Union nor the granting authority can be held responsible for them. The present research is part of the activities of the Dipartimento di Eccellenza 2023-2027 grant, funded by MUR. PFA, FB, MC, NDM, and FR are members of INdAM-GNCS. }}

\author[1]{Paola F. Antonietti \orcidlink{0000-0002-2138-3878}}
\author[1]{Francesca Bonizzoni \orcidlink{0000-0002-6222-3352}}
\author[1]{Mattia Corti \orcidlink{0000-0002-7014-972X}}
\author[1]{Nicola De March\orcidlink{0009-0005-8288-7964}}
\author[1]{Salvatore Di Noto}
\author[1]{Francesco Regazzoni \orcidlink{0000-0002-4207-1400}}

\affil[1]{MOX - Modeling and Scientific Computing Laboratory, Dipartimento di Matematica, Politecnico di Milano, Piazza Leonardo da Vinci 32, Milan, 20133, Italy}

\AtBeginDocument{\RenewCommandCopy\qty\SI}
\begin{document}
\date{}
\maketitle

\begin{abstract}
The Fisher--Kolmogorov equation models the spatio-temporal evolution of interacting biological species and is extensively employed in fields such as ecology, population dynamics, and the modelling of neurodegenerative diseases. Under suitable assumptions on the data, the solution $c$ is non-negative, a key feature because $c$ typically denotes a population density or a relative concentration. However, standard discretisation methods often fail to preserve this property, resulting in non-physical oscillations and unstable numerical approximations. In this work, we propose and analyse a numerical method to stabilise the dynamics of the Fisher--Kolmogorov model around the unstable equilibrium $c=0$. The proposed approach combines a discontinuous Galerkin spatial discretisation on general polygonal and polyhedral meshes with the Crank--Nicolson time integration scheme. The main idea is to suitably modify the formulation at the continuous level so that, on the one hand, it is strongly consistent with the original model, and, on the other hand,  it ensures stability when moving to the discrete setting.  We prove well-posedness of the semi-discrete formulation, derive stability bounds and prove optimal \textit{a priori} error estimates in a suitable energy norm. The theoretical results are demonstrated through a comprehensive set of numerical examples. Moreover, we consider an application arising in computational neuroscience by simulating the propagation of $\alpha$-synuclein, a key pathogenic protein implicated in Parkinson's disease and other neurodegenerative diseases, demonstrating that the proposed scheme is stable, high-order accurate, and robust in a biologically relevant computational setting.
\end{abstract}

\section{Introduction}
\label{sec:introduction}
The Fisher--Kolmogorov (FK) equation is a time-dependent, non-linear reaction--diffusion model combining diffusion with a logistic reaction term \cite{salsa_partial_2022}. Such a model is widely employed in many application fields, such as ecology, population dynamics, tumour growth, and chemical and environmental engineering.  More recently, it has been widely employed to describe the propagation of prion-like proteins in neurodegenerative diseases, including Alzheimer's and Parkinson's diseases \cite{walker_neurodegenerative_2015,chen_amyloid_2017,jouanne_tau_2017,calabresi_alpha-synuclein_2023}. More detailed descriptions based on the Smoluchowski equation have been proposed to model protein aggregation kinetics \cite{franchi_from_2016,bertsch_alzheimer_2017}, however, their computational complexity has motivated the development of reduced reaction--diffusion models, such as the heterodimer model and the FK equation \cite{weickenmeier_physics_2019,fornari_prion-like_2019}.

The widespread use of the FK equation in different applications has stimulated the development of a variety of numerical methods for its approximation. Several approaches have been investigated, including finite element methods \cite{weickenmeier_physics_2019, engwer_estimating_2021, roessler_numerical_1997}, finite difference schemes \cite{macias-diaz_explicit_2012}, boundary element methods \cite{gortsas_local_2022}, and discontinuous Galerkin  methods \cite{corti_discontinuous_2023, Naldi2013DiscontinuousGA}. 
A key important feature of the FK model is that, under suitable assumptions on the data, the  solution $c$ is non-negative and bounded, a key feature because $c$ typically denotes a population density or a relative concentration. In particular, the preservation of non-negativity plays a central role in applications involving biological quantities, such as the diffusion of misfolded proteins in the brain. At the discrete level, however, standard discretisation methods often fail to preserve this property, resulting in non-physical oscillations and unstable numerical approximations. Consequently, recent works have focused on the design of structure-preserving formulations capable of enforcing such a property at the discrete level. For example, a first class of methods enforces positivity/boundedness in an algebraic, a-posteriori fashion via scaling limiters, flux-correction, or Lagrange-multiplier approaches; see, e.g., \cite{sun_discontinuous_2019, amiri_nodally_2024, cheng_lagrange_2022, huang_bound_2023}. Instead, a second class of methods relies on a non-linear change of variables, expressing the unknown in terms of an entropy (or exponential) variable, so that positivity/boundedness constraints are automatically satisfied, see, e.g.,   \cite{bonizzoni_structure_2020,corti_structure_2024,antonietti_structure-preserving_2026, gomez_structure_2026}. While effective, methods based on a change of variables introduce additional non-linearities in the differential operators and typically lead to more ill-conditioning of the discrete systems, complicating the design of efficient non-linear solvers.

\par
In this work, we propose and analyse a numerical method based on a stabilised version of the FK model (SFK). The main idea concerns the non-linear reaction term, for which an absolute-value stabilization is introduced. On the one hand, the modified formulation is strongly consistent with the original model at the continuous level, thus it preserves the original structure of the equation. On the other hand, it automatically ensures stability, when moving to the discrete setting, avoiding unphysical oscillations. To approximate the solution of the proposed SFK model, we propose a high-order discontinuous Galerkin method on general polygonal and polyhedral meshes (PolyDG) for the spatial discretisation, coupled with a second-order implicit time integration scheme such as Crank--Nicolson. This choice ensures high-order accuracy and enables the treatment of complex geometries, providing significant flexibility in mesh generation, particularly for the application of interest.
\par
We present a complete analysis of the semi-discrete formulation, establishing improved stability and convergence estimates compared to the original formulation in \cite{corti_discontinuous_2023}, in both two and three dimensions. Moreover, the structure of the method naturally extends the analysis to general polytopal meshes \cite{cangiani_hp-version_2014,cangiani_hp-version_2017}, which is especially relevant for brain applications where mesh agglomeration strategies are particularly crucial to address the inherit geometric complexity \cite{antonietti_polytopal_2026}.

\par
The theoretical results are validated through a comprehensive set of numerical experiments. We also demonstrate the practical performance of the proposed formulation by considering an application in computational neuroscience, namely the simulation of protein propagation, which is a hallmark of neurodegenerative diseases. In particular, we consider the propagation of $\alpha$-synuclein, a protein implicated in Parkinson's disease and other neurodegenerative disorders, within a brain section reconstructed from medical images. The resulting diffusion patterns of misfolded proteins are consistent with observations reported in the medical literature \cite{braak_staging_2003,goedert_alzheimer_2015}. A comparison between the SFK and the standard FK models further shows that the former provides reliable results even with lower polynomial degrees, making it a robust and computationally efficient approach.

\par
\bigskip
This paper is organised as follows. In Section~\ref{sec:FKPP}, we recall the FK model and its functional setting and we introduce the SFK model. In Section~\ref{sec:PolyDG}, we present some preliminary technical backgrounds and the PolyDG space discretisation of the problem. In Section~\ref{sec:stability}, we prove the stability bounds and in  Section~\ref{sec:errorAnalysis}, we derive an \textit{a-priori} error estimate of the semi-discretised problem. In Section~\ref{sec:discreteMFK}, we introduce the Crank--Nicolson time discretisation scheme for the SFK model. In Section~\ref{sec:verification} and Section~\ref{sec:validation}, we report the numerical results, starting from some convergence tests in the case of a given manufactured solution and the simulations of the diffusion of $\alpha$-synuclein protein in Parkinson's disease in a two-dimensional brain section reconstructed from medical images, respectively. Finally, in Section~\ref{sec:end}, we draw some conclusions and discuss possible future developments.

\section{The Fisher-Kolmogorov model and its stabilised version}
\label{sec:FKPP}
In this section, we present the FK model, used to describe the reaction and diffusion of interacting species \cite{weickenmeier_physics_2019}. We consider the domain $\Omega \subset \mathbb{R}^d$ with $d= 2, 3$ and the time interval $(0,T)$, being $T > 0$ a fixed final time. 
The strong formulation of the FK model reads: Find $c = c(\boldsymbol{x},t)$ such that
\begin{equation}
\begin{aligned}
    \frac{\partial c}{\partial t} & = \nabla \cdot (\mathbf{D} \nabla c) + \alpha c (1-c) + f
        &&\quad \text{in } \Omega \times (0,T], \\[4pt]
    (\mathbf{D}\nabla c) \cdot \boldsymbol{n} & = \phi_{N}
        &&\quad \text{on } \Gamma_{N} \times (0,T], \\[4pt]
    c & = c_{D}
        &&\quad \text{on } \Gamma_{D} \times (0,T], \\[4pt]
    c(\cdot,0) & = c_{0}
        &&\quad \text{in } \Omega.
\end{aligned}
\label{eq:eqFK}
\end{equation}
In Equation (\ref{eq:eqFK}), the reaction parameter $\alpha>0$ represents the local conversion rate and $\mathbf{D}$ is the diffusion tensor.
The forcing term $f = f(\boldsymbol{x},t)$ models the external addition/removal of mass. The Neumann condition prescribes a sufficiently regular flux $\phi_N = \phi_N (\boldsymbol{x},t)$ on the boundary of the domain denoted by $\Gamma_N$, while the Dirichlet condition imposes a concentration value $c_D = c_D (\boldsymbol{x},t)$ on the boundary portion $\Gamma_D$. We note that $\Gamma_N \cup \Gamma_D = \partial \Omega$ and $\Gamma_N \cap \Gamma_D = \emptyset$.
\par
We employ a standard definition of the scalar product in $L^2({\Omega})$, represented by $(\cdot,\cdot)_\Omega$, and of its induced norm $\|\cdot\|$, whose definition can be extended component-wise to vector-valued and tensor-valued functions \cite{salsa_partial_2022}. We define  $W_{D} = \left\{w \in H^{1}(\Omega) : w|_{\Gamma_{D}} = c_{D}\right\}$ and $W_{0} = H^{1}_{\Gamma_D}(\Omega)$; when $\Gamma_D = \emptyset$ and $\Gamma_N = \partial \Omega$, we have that $W_D = W_0 = H^1({\Omega})$.
We  introduce the following assumption on the data.
\begin{assumption} [Regularity of data]
We assume the following regularity for the data appearing in Equation~\eqref{eq:eqFK}:
\begin{itemize}
    \item $\mathbf{D} \in L^\infty(\Omega, \mathbb{R}^{d\times d})$ and $\exists\,
    d_0 > 0$ such that: $d_0 |\boldsymbol{\psi}|^2 \leq \boldsymbol{\psi}^T \mathbf{D} \boldsymbol{\psi} \quad \forall \boldsymbol{\psi} \in \mathbb{R}^d.$
    \item $ f \in L^2((0,T]; L^2(\Omega))$.
    \item $ \phi_N \in L^2((0,T]; L^2(\Gamma_N))$.
    \item $ c_D \in L^2((0,T]; H^{1/2}(\Gamma_D))$.
    \item $ c_0 \in L^2(\Omega)$.
\end{itemize}
\label{ass:reg_coeff}
\end{assumption}
\par
Setting
\begin{alignat}{3}
    & a(c,w)  = (\sqrt{\mathbf{D}}\nabla c, \sqrt{\mathbf{D}}\nabla w)_{\Omega}. 
    &&\qquad F(w)  = (f,w)_{\Omega} + (\phi_{N},w)_{\Gamma_{N}}
    &&\qquad \forall c \in W_D, \forall w\in W_0,
 \end{alignat}    
the weak formulation of problem \eqref{eq:eqFK} reads: Find $c = c(t) \in W_{D}$ such that for any $t \in (0,T]$:
\begin{equation}
\label{weakFK}
\begin{aligned}
     \left(\frac{\partial c}{\partial t}, w \right)_{\Omega} + a(c,w) - \left(\alpha c, w\right)_\Omega + \left(\alpha c^2, w\right)_\Omega & = F(w) &&\quad \forall w \in W_{0}, \\
     c(\cdot,0) & = {c}_{0} &&\quad \text{in} \ \Omega.
\end{aligned}
\end{equation}

\begin{remark}
\textit{Under Assumption~\ref{ass:reg_coeff}, if $f=0,\  \phi_N=0$ and $\Gamma_D = \emptyset$ with $c_0(\boldsymbol{x}) \in [0,1] \ \textrm{a.e.\,} \boldsymbol{x} \in \Omega$, $c_0(\boldsymbol{x})$ not identically equal to zero, it can be proved that the FK equation admits a travelling wave solution \cite{wen-shan_exact_2006}. Moreover, we can state that the solution $c(\boldsymbol{x},t) \in [0,1] \ \textrm{a.e.\,} \boldsymbol{x} \in \Omega, \ t > 0$.
This translates into the existence of two equilibrium points: an unstable equilibrium  at $c=0$ and a stable one at $c=1$, such that: }
\begin{center}
   $\displaystyle\lim_{t \to +\infty} c(\boldsymbol{x},t) = 1.$
\end{center}
\label{remark:FK_solution_non_negative}
\end{remark}

In view of the forthcoming discretisation, we next introduce the following the stabilised FK model.
\subsection{The stabilised FK model}
\label{sec:MFK}
In this section, we propose a stabilised version of the FK model (SFK) based on a modification of the reaction term. The strong formulation of the SFK model reads:
\begin{equation}
\begin{aligned}
    \frac{\partial \hat{c}}{\partial t} & = \nabla \cdot (\mathbf{D} \nabla \hat{c}) + \alpha |\hat{c}| (1-\hat{c}) + f
        &&\quad \text{in } \Omega \times (0,T], \\[4pt]
    (\mathbf{D}\nabla \hat{c}) \cdot \boldsymbol{n} & = \phi_{N}
        &&\quad \text{on } \Gamma_{N} \times (0,T], \\[4pt]
    \hat{c} & = c_{D}
        &&\quad \text{on } \Gamma_{D} \times (0,T], \\[4pt]
    \hat{c}(\cdot,0) & = c_{0}
        &&\quad \text{in } \Omega.
\end{aligned}
\label{eqMFK}
\end{equation}
In Equation~\eqref{eqMFK}, we recover the same coefficients of Equation~\eqref{eq:eqFK} and Assumption~\ref{ass:reg_coeff} still holds for the SFK model. 
\begin{remark}
As it will be made clear in the following sections, the choice of $|\hat{c}|$ in the reaction term of \eqref{eqMFK} transforms the unstable equilibrium $c=0$ into a stable one out of the region of interest, preventing the (numerical) solution from developing instabilities and unphysical oscillations. 
At the continuous level, the solutions of the classical FK model and of the SFK model coincide and strong consistency is guaranteed whenever $c(\boldsymbol{x},t) \ge 0$ and $\hat{c}(\boldsymbol{x},t) \ge 0$, respectively, e.g. as in the case of Remark~\ref{remark:FK_solution_non_negative}. At the discrete level, the proposed modification  ensures an automatic stabilization of the original formulation \eqref{eq_semidiscreteFK}. 
\eremk
\end{remark}

\section{Polytopal discontinuous Galerkin semi-discrete formulation}
\label{sec:PolyDG}
In this section,  we construct the space approximation for problems \eqref{weakFK} and \eqref{eqMFK} using the PolyDG method \cite{corti_discontinuous_2023}. 
To ease the discussion and without loss of generality, here and in the following we assume, that the Dirichlet datum is $c_D=0$. First, we introduce a mesh partition $\mathscr{T}_{h}$ of the domain $\Omega$ made of disjoint polygonal/polyhedral elements $K$. For each of them, we define its measure $|K|$ and its diameter $h_K$ such that $h = \max_{K \in \mathscr{T}_{h}} \{h_K\} < 1$. 
From now on, we will use the following notation $x \lesssim y$ to say that $\exists \, C > 0: x \leq Cy$, where the constant $C$ depends on the model parameters and on the polynomial degree $p$ but is independent of the mesh size $h$. Following \cite{cangiani_hp-version_2017}, the interface is defined as the intersection of the ($d - 1$)-dimensional facets of two neighbouring elements:
\begin{itemize}
    \item for $d=2$, the interfaces are line segments and the set of segments is denoted with $\mathscr{F}_{h}$;
    \item for $d=3$, any interface is a generic polygon and we assume that each of them can be decomposed into planar triangles, whose set is denoted with $\mathscr{F}_{h}$.
\end{itemize}
The set $\mathscr{F}_{h}$ can be seen as the union of interior faces ($\mathscr{F}_{h}^I$) and exterior faces ($\mathscr{F}_{h}^B$) lying on the domain boundary $\partial \Omega$. The boundary faces can be split taking into account the type of boundary condition:  $\mathscr{F}_{h}^B = \mathscr{F}_{h}^D \cup \mathscr{F}_{h}^N$, where $\mathscr{F}_{h}^D$ and $\mathscr{F}_{h}^N$ are the boundary faces contained in $\Gamma_D$ and $\Gamma_N$, respectively.
\par
We introduce 
the following mesh regularity assumption that will be exploited in 
the theory developed in the next sections.

\begin{assumption}[polytopal-regular mesh \cite{cangiani_hp-version_2017}]
\label{ass:mesh_cangiani}
The mesh sequence $\{\mathscr{T}_{h}\}_h$ uniformly satisfies the following properties:
\begin{enumerate}
    \item \textbf{polytopal-regularity}: let $\{S_K^F\}_{F\subset\partial K}$ be a set of non-overlapping $d$-dimensional simplices contained in $K$, then:
    \begin{equation*}
    \forall K\in\mathscr{T}_h\quad \exists \{S_K^F\}_{F\subset\partial K} \;\mathrm{such}\;\mathrm{that} \quad \forall F\subset\partial K \quad \overline{F}=\partial\overline{K}\cap\overline{S^F_K}\;\mathrm{and}\; h_K \lesssim d|S_K^F|\;|F|^{-1}.
    \end{equation*}
    \item \textbf{Covering mesh condition}: For each $\mathscr{T}_h \in \{\mathscr{T}_h\}_h$ there exists a shape-regular, simplicial covering $\hat{\mathscr{T}_h}$ such that for each pair $K\in\mathscr{T}_h$ and $\hat{K}\in\hat{\mathscr{T}_h}$ with $\hat{K}\subset K$ it holds $h_K\lesssim h_{\hat{K}}$ and $\max_{K\in\mathscr{T}_h}\{|K'|\}\lesssim 1$ where $K'\in\mathscr{T}_h: K'\cap \hat{K} \neq 0, \hat{K}\in\hat{\mathscr{T}_h}, K\subset \hat{K}$.
    \item \textbf{Local bounded variation property}: $\forall F\in\faces \, F\subset\partial K_1\cap\partial K_2 \quad K_1,K_2\in\mathscr{T}_h \Rightarrow h_{K_1}\lesssim h_{K_2} \lesssim h_{K_1}$ where the hidden constants are independent of the number of faces of $K_1$ and $K_2$.
\end{enumerate}
\end{assumption}
\begin{remark}
We remark that Assumption~\ref{ass:mesh_cangiani} is less restrictive than the classical choice of \cite{dipietro_hybrid_2020}, which requires stronger hypotheses on the size of the element faces.
\eremk
\end{remark}
\par
We define $\mathbb{P}_p(K)$ to be the space of polynomials of total degree $p \geq 1$ over a mesh element $K$. Then, we introduce the following discontinuous finite element space $ \Wh = \left\{w \in L^{2}(\Omega): w|_{K} \in \mathbb{P}_{p}(K) \ \  \forall K \in \mathscr{T}_{h} \right\}$. Let $F \in \mathscr{F}_{h}$ be a face shared by the elements $K^{\pm}$, then $\boldsymbol{n}^\pm$ is the corresponding unit normal vector on face $F$ pointing exterior to $K^{\pm}$. For sufficiently regular scalar-valued functions $v$ and vector-valued functions $\boldsymbol{q}$, we define the following trace operators \cite{arnold_unified_2002}, respectively:
\begin{itemize}
    \item jump operator $\jump{\cdot}$ on $F \in \mathscr{F}_{h}^I $: $\jump{v} = v^+ \boldsymbol{n}^+ + v^- \boldsymbol{n}^-, \quad \jump{\boldsymbol{q}} = \boldsymbol{q}^+ \cdot \boldsymbol{n}^+ +  \boldsymbol{q}^- \cdot \boldsymbol{n}^-$;
    \item average operator $\avg{\cdot}$ on $F \in \mathscr{F}_{h}^I $: $\avg{v} = \frac{1}{2} (v^+ + v^-), \quad \avg{\boldsymbol{q}} = \frac{1}{2}(\boldsymbol{q}^+ + \boldsymbol{q}^-)$,
\end{itemize}
where the superscripts $\pm$ refer to the traces of the functions on $F$ taken within the interior to $K^\pm$ respectively. 
Moreover, we can introduce similar operators on the face $F \in \mathscr{F}_{h}^D$ associated with the cell $K \in \mathscr{T}_{h}$ with $\boldsymbol{n}$ outward unit normal on $\partial \Omega$:
\begin{itemize}
    \item jump operator $\jump{\cdot}$ on $F \in \mathscr{F}_{h}^D$: $\jump{v} = v\boldsymbol{n}, \quad \jump{\boldsymbol{q}} = \boldsymbol{q} \cdot \boldsymbol{n}$;
    \item average operator $\avg{\cdot}$ on $F \in \mathscr{F}_{h}^D$: $\avg{v} = v, \quad\avg{\boldsymbol{q}} = \boldsymbol{q}$.
\end{itemize}
Additionally, we introduce the following broken Sobolev spaces $H^r(\mathscr{T}_{h}) = \left\{w_h \in L^2(\Omega): w_h|_K \in H^r(K) \ \forall K \in \mathscr{T}_{h}\right\}$ for $r \geq 1$ and the shorthand notation for the $L^2$-norm on a set of faces $\mathscr{F}$ as $\|\cdot\|_{\mathscr{F}}^2 = \sum_{F \in \mathscr{F}} \|\cdot\|_{L^2(F)}^2$.\\
Finally,  we define the penalization function $\eta: \mathscr{F}_{h} \rightarrow \mathbb{R}^+$:
\begin{equation}
\eta  = \eta_0
\begin{dcases}
     \displaystyle\frac{p^2}{\left\{ h\right\}_\mathrm{H}} &\text{on} \ F \in \mathscr{F}_{h}^I, \\
     \displaystyle\frac{p^2}{h}  &\text{on} \ F \in \mathscr{F}_{h}^D,
\end{dcases}
\label{penalty}
\end{equation}
where the operator $\left\{\cdot \right\}_\mathrm{H}$ stands for the harmonic average on $F \in \mathscr{F}_{h} ^I $ and $\eta_0$ is a suitable parameter, which has to be large enough to achieve stability.
\subsection{Interior-penalty DG semi-discrete formulation of the FK and SFK models} 
We define the bilinear form $\mathscr{A}: \Wh \times \Wh  \rightarrow \mathbb{R}$ as:
\begin{equation}
\label{eq:bilform}
    \mathscr{A}(c,w) =
        \displaystyle\int_{\Omega} (\mathbf{D}\nabla_{h} c) \cdot \nabla_{h} w
         + \int_{\mathcal{F}_{h}^{I} \cup \mathcal{F}_{h}^{D}} \Big(\eta \jump{c} \cdot \jump{w} - \avg{\mathbf{D}\nabla c} \cdot \jump{w} -  \jump{c} \cdot \avg{\mathbf{D} \nabla  w}\Big) d\sigma \quad \forall c,w \in \Wh,
\end{equation}
where $\nabla_h \cdot$ is the elementwise gradient \cite{arnold_unified_2002}.
Then, the semi-discrete PolyDG discretisation of the FK model \eqref{eq:eqFK} reads: Find $c_{h}(t) \in \Wh$ such that $\forall t > 0$:
\begin{equation}
\begin{aligned}
    \left(\frac{\partial c_{h}(t)}{\partial t}, w_{h} \right)_{\Omega} + \mathscr{A}(c_{h}(t),w_{h}) - (\alpha c_{h}(t),w_{h})_\Omega + (\alpha c_{h}^2(t),w_{h})_\Omega & = F(w_{h}) &&\quad \forall w_{h} \in \Wh, \\
    c_{h}(0) & = c_{h}^{0} &&\quad \text{in} \ \Omega_{h},
\end{aligned}
\label{eq_semidiscreteFK}
\end{equation}
where $c_h^0 \in \Wh$ is a suitable approximation of $c_0$. For the analysis of the method in \eqref{eq_semidiscreteFK}, we refer to \cite{corti_discontinuous_2023}.\\

Analogously, for the SFK model \eqref{eqMFK}, the resulting semi-discrete PolyDG formulation of the SFK equation reads: Find $\hat{c}_{h}(t) \in \Wh$ such that  $\forall t >0$:
\begin{equation}
\begin{aligned}
    \left(\frac{\partial \hat{c}_{h}(t)}{\partial t}, w_{h} \right)_{\Omega} + \mathscr{A}(\hat{c}_{h}(t),w_{h}) - (\alpha |\hat{c}_{h}|(t),w_{h})_\Omega + (\alpha |\hat{c}_{h}(t)|\hat{c}_h(t),w_{h})_\Omega & = F(w_{h}) &&\quad \forall w_{h} \in \Wh, \\
    \hat{c}_{h}(0) & = c_{h}^{0} &&\quad \text{in} \ \Omega_{h}.
\end{aligned}
\label{eq_semidiscreteMFK}
\end{equation}
Before presenting the analysis of formulation \eqref{eq_semidiscreteMFK}, we recall and state some preliminary estimates and properties.
\subsection{Preliminary estimates and properties}
First of all let us introduce the following DG-norms:
\begin{alignat}{3}
\label{eq:dgnorm}
    \dgnorm{v}^2 & = \|\sqrt{\mathbf{D}} \nabla_h{v}\|^2 + \|\sqrt{\eta} \jump{v}\|_{\mathscr{F}_{h}^I \cup \mathscr{F}_{h}^D}^2, &&\quad \forall v \in H^1({\mathscr{T}_{h}}), \\
    \label{eq:3DGnorm}
    \tdgnorm{v}^2 & = \dgnorm{v}^2 + \|\eta^{-\frac{1}{2}} \avg{\mathbf{D} \nabla_h v}\|_{\mathscr{F}_{h}^I \cup \mathscr{F}_{h}^D}^2, && \quad \forall v \in H^2(\mathscr{T}_{h}).
\end{alignat}
Under Assumption~\ref{ass:mesh_cangiani}, we recall that $\tdgnorm{v}^2 \lesssim \dgnorm{v}^2$ for any $v \in \Wh$, thanks to the trace-inverse inequality \cite{cangiani_hp-version_2017}. We introduce the following proposition on the continuity and coercivity of the bilinear form \cite{cangiani_hp-version_2014,cangiani_hp-version_2017}.
\begin{proposition}[Continuity and coercivity of $\mathscr{A}(\cdot,\cdot)$ \cite{cangiani_hp-version_2017}]
Let Assumption~\ref{ass:mesh_cangiani} be satisfied, then the bilinear form $\mathscr{A}(\cdot,\cdot)$ is continuous and coercive (provided that the penalty parameter $\eta$ is large enough):
\begin{alignat}{3}
\label{continuity}
|\mathscr{A}(v_h,w_h)| & \lesssim \dgnorm{v_h} \dgnorm{w_h}, && \quad \forall v_h, w_h \in \Wh, \\
\label{continuity2}
|\mathscr{A}(v_h,w_h)| & \lesssim \dgnorm{v_h} \tdgnorm{w}, && \quad \forall v_h \in \Wh,\,\forall w \in H^2(\mathcal{T}_h), \\
\label{coercivity}
\exists \mu>0 \text{ such that } \mu \dgnorm{v_h}^2 & \leq \mathscr{A}(v_h,v_h) &&\quad \forall v_h \in \Wh.
\end{alignat}
\end{proposition}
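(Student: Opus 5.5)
We prove the three estimates in turn, working face by face and element by element. The tools are the Cauchy--Schwarz inequality, the definitions of $\dgnorm{\cdot}$ and $\tdgnorm{\cdot}$, and the trace-inverse inequality recalled above, namely $\tdgnorm{v_h}\lesssim\dgnorm{v_h}$ for $v_h\in\Wh$. The bilinear form \eqref{eq:bilform} splits into four terms: the volume term $\int_\Omega \mathbf{D}\nabla_h v\cdot\nabla_h w$, the penalty term $\int \eta\jump{v}\cdot\jump{w}$, and the two consistency terms $-\int\avg{\mathbf{D}\nabla v}\cdot\jump{w}$ and $-\int\jump{v}\cdot\avg{\mathbf{D}\nabla w}$.

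\textbf{Continuity \eqref{continuity2}.} Take $v_h\in\Wh$ and $w\in H^2(\mathscr{T}_h)$, and estimate each term separately.
\begin{itemize}
\item The volume term is bounded by $\|\sqrt{\mathbf{D}}\nabla_h v_h\|\,\|\sqrt{\mathbf{D}}\nabla_h w\|$.
\item The penalty term is bounded by $\|\sqrt\eta\jump{v_h}\|\,\|\sqrt\eta\jump{w}\|$.
\item For the term with $\avg{\mathbf{D}\nabla v_h}\cdot\jump{w}$, insert the weights $\eta^{-1/2}\eta^{1/2}$. This gives the bound $\|\eta^{-1/2}\avg{\mathbf{D}\nabla_h v_h}\|\,\|\sqrt\eta\jump{w}\|\le \tdgnorm{v_h}\dgnorm{w}$.
\item The term with $\jump{v_h}\cdot\avg{\mathbf{D}\nabla w}$ is bounded by $\|\sqrt\eta\jump{v_h}\|\,\|\eta^{-1/2}\avg{\mathbf{D}\nabla_h w}\|\le\dgnorm{v_h}\tdgnorm{w}$.
\end{itemize}
Summing, we get $|\mathscr{A}(v_h,w)|\lesssim\tdgnorm{v_h}\tdgnorm{w}$. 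Since $v_h$ is discrete, the trace-inverse bound gives $\tdgnorm{v_h}\lesssim\dgnorm{v_h}$, which proves \eqref{continuity2}.

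\textbf{Continuity \eqref{continuity}.} When $w=w_h\in\Wh$ as well, apply the trace-inverse bound to $w_h$ too. This yields \eqref{continuity}.

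\textbf{Coercivity \eqref{coercivity}.} Write
\[
\mathscr{A}(v_h,v_h)=\dgnorm{v_h}^2-2\int_{\mathscr{F}_h^I\cup\mathscr{F}_h^D}\avg{\mathbf{D}\nabla_h v_h}\cdot\jump{v_h}.
\]
The main obstacle is to bound the cross term with constants that are explicit in $\eta_0$. The plain bound $\tdgnorm{v_h}\lesssim\dgnorm{v_h}$ hides this dependence, so we must redo it at the level of a single face. By Assumption~\ref{ass:mesh_cangiani}(1), the simplices $S_K^F$ satisfy $h_K\lesssim d|S_K^F|\,|F|^{-1}$. The polytopal trace-inverse inequality of \cite{cangiani_hp-version_2017} then gives
\[
\|\mathbf{D}\nabla v_h\|_{L^2(F)}^2\lesssim \frac{p^2}{h_K}\,\|\sqrt{\mathbf{D}}\nabla v_h\|_{L^2(S_K^F)}^2 .
\]
The local bounded variation property, Assumption~\ref{ass:mesh_cangiani}(3), lets us compare $h_K$ with the harmonic average $\{h\}_\mathrm{H}$ on interior faces. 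Combined with the definition \eqref{penalty} of $\eta$, this gives
\[
\|\eta^{-1/2}\avg{\mathbf{D}\nabla_h v_h}\|^2_{\mathscr{F}_h^I\cup\mathscr{F}_h^D}\le \frac{C_{\rm tr}}{\eta_0}\,\|\sqrt{\mathbf{D}}\nabla_h v_h\|^2,
\]
where $C_{\rm tr}$ is independent of $h$, of $\eta_0$, and of the number of faces per element. Here the boundedness of $\mathbf{D}$ enters, and the simplices $S_K^F$ do not overlap, so summing over faces counts each element volume once.

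Next apply Young's inequality with parameter $\varepsilon$ to the cross term:
\[
2\int\avg{\mathbf{D}\nabla_h v_h}\cdot\jump{v_h}\le \varepsilon\|\sqrt\eta\jump{v_h}\|^2+\varepsilon^{-1}\frac{C_{\rm tr}}{\eta_0}\|\sqrt{\mathbf{D}}\nabla_h v_h\|^2 .
\]
Take $\varepsilon=1/2$ and require $\eta_0\ge 4C_{\rm tr}$. Then
\[
\mathscr{A}(v_h,v_h)\ge \tfrac12\dgnorm{v_h}^2,
\]
so \eqref{coercivity} holds with $\mu=1/2$.
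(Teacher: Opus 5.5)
Your proof is correct and is essentially the standard argument of \cite{cangiani_hp-version_2017}, which the paper cites rather than reproduces. That argument uses weighted Cauchy--Schwarz plus the trace-inverse bound $\tdgnorm{v_h}\lesssim\dgnorm{v_h}$ for continuity, and for coercivity the simplex-wise inverse estimate on the non-overlapping $S_K^F$, summed over faces, followed by Young's inequality. Only cosmetic points remain: the bound on $\mathbf{D}\nabla v_h$ also uses the ellipticity constant $d_0$, and it tacitly needs elementwise traces of $\mathbf{D}$ and a local $h_K$ in the boundary penalty; conversely, the local bounded variation property is not needed, since $\{h\}_\mathrm{H}\le 2\min\{h_{K^+},h_{K^-}\}$ always holds.
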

Then, for a polytopal mesh $\mathcal{T}_h$ which satisfies Assumption~\ref{ass:mesh_cangiani} and covering simplicial mesh $\mathcal{T}_h^\star$ as in \cite[Def. 4.27]{cangiani_hp_2022}, such that $K\subset K^\star\in\mathcal{T}_h^\star$ for any $K\in\mathcal{T}_h$, we can define a Stein operator $\mathcal{E}:H^m(K^\star)\rightarrow H^m(\mathbb{R}^d)$ for any $m\in\mathbb{N}_0$ such that: $\mathcal{E}v|_K = v$, $\|\mathcal{E}v\|_{H^m(\mathbb{R}^d)} \lesssim \|v\|_{H^m(K)}$ for all $v\in H^m(K)$.
\begin{proposition}[Polynomial interpolant \cite{cangiani_hp-version_2017}] \label{propInt}
Let Assumption~\ref{ass:mesh_cangiani} be satisfied; then the following estimate holds: $\forall u \in H^n(\mathscr{T}_h)$, $n > 1 + d/2$, $\exists u_I \in \Wh$ defined as in \cite{cangiani_hp-version_2017}, equation (3.25), such that
\begin{align}
     \tdgnorm{u-u_I}^2 \lesssim \ \displaystyle\sum_{K^\star\in \mathscr{T}_h} h_K^{2  \text{min}\left\{p+1,n\right\} - 2} \|\mathcal{E} u\|_{H^n(K^\star)}^2.
    \label{interp}
\end{align}
\end{proposition}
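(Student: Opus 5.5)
The estimate in Proposition~\ref{propInt} is essentially the one in \cite{cangiani_hp-version_2017}, so the plan is to reproduce the element-by-element argument. Each contribution to $\tdgnorm{u-u_I}^2$ is reduced to local approximation bounds on the simplices $K^\star$ of the covering mesh. The interpolant $u_I$ is chosen elementwise: on each $K\in\mathscr{T}_h$ we set $u_I|_K=\Pi_p(\mathcal{E}u)|_{K}$, where $\Pi_p$ is a standard polynomial approximation operator of degree $p$ on the simplex $K^\star\supset K$ (for instance an averaged Taylor or $L^2$-type projection). The Bramble--Hilbert lemma on the shape-regular simplex $K^\star$ then gives, for $0\le q\le n$,
\[
|\mathcal{E}u-\Pi_p\mathcal{E}u|_{H^q(K^\star)}\lesssim h_{K^\star}^{\min\{p+1,n\}-q}\|\mathcal{E}u\|_{H^n(K^\star)}.
\]
The covering mesh condition of Assumption~\ref{ass:mesh_cangiani} gives $h_{K^\star}\simeq h_K$ and guarantees that each $K^\star$ overlaps only a uniformly bounded number of elements. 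Hence summing over elements produces only a constant factor.

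Next the terms of $\tdgnorm{\cdot}$ are treated one at a time, writing $e=u-u_I$. The volume term $\|\sqrt{\mathbf{D}}\nabla_h e\|^2$ follows from the $q=1$ bound with $\mathbf{D}\in L^\infty$, using $K\subset K^\star$. For the face terms $\|\sqrt{\eta}\jump{e}\|^2$ and $\|\eta^{-1/2}\avg{\mathbf{D}\nabla_h e}\|^2$ we split each jump and average into the contributions from $K^\pm$. We then need a trace inequality for $H^1$ functions on polytopes with possibly degenerate faces. This is where polytopal regularity enters: for $F\subset\partial K$ we use the simplex $S_K^F$ to get
\[
\|v\|_{L^2(F)}^2\lesssim \frac{|F|}{d|S_K^F|}\Big(\|v\|_{L^2(S_K^F)}^2+h_K\|v\|_{L^2(S_K^F)}\|\nabla v\|_{L^2(S_K^F)}\Big).
\]
Combined with $h_K\lesssim d|S_K^F||F|^{-1}$ and the fact that the simplices $S_K^F$ do not overlap, summing over $F\subset\partial K$ yields
\[
\|v\|^2_{L^2(\partial K)}\lesssim h_K^{-1}\|v\|_{L^2(K)}^2+\|v\|_{L^2(K)}\|\nabla v\|_{L^2(K)}.
\]
The constant does not depend on the number of faces.

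We apply this with $v=e$ and with $v=\nabla e$ (the latter is why we need $n\ge2$, and $n>1+d/2$ ensures the traces make sense). Then we insert the weights $\eta\simeq p^2/h_K$ and $\eta^{-1}\simeq h_K/p^2$. The local bounded variation property makes the harmonic average $\{h\}_\mathrm{H}$ comparable to $h_{K^\pm}$. The powers of $h$ combine to $h_K^{2\min\{p+1,n\}-2}$ in both face terms: for example $h_K^{-1}\cdot h_K^{-1}\cdot h_K^{2\min\{p+1,n\}}$ for the jump term, and $h_K\cdot h_K^{-1}\cdot h_K^{2\min\{p+1,n\}-2}$ for the flux term.

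The main obstacle is the face-robust trace inequality: it must not depend on the number or size of the faces, which may degenerate on agglomerated meshes. The $p$-dependence can be absorbed into the hidden constant, as agreed in the notation $\lesssim$. The other delicate point is bounding $\|\cdot\|_{L^2(S_K^F)}$ by quantities on $K^\star$, which relies on $S_K^F\subset K\subset K^\star$ and on the Stein extension bound $\|\mathcal{E}u\|_{H^n}\lesssim\|u\|_{H^n(K)}$ stated before Proposition~\ref{propInt}.
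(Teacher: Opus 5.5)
Your argument is correct. The paper gives no proof of its own here; it imports the estimate from \cite{cangiani_hp-version_2017}. Your construction of $u_I$ (approximate $\mathcal{E}u$ on the covering simplex $K^\star$, then restrict to $K$) and your treatment of the volume term coincide with that source.

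Where you differ is in the face terms. The cited analysis bounds $\|u-u_I\|_{L^2(F)}$ and $\|\nabla(u-u_I)\|_{L^2(F)}$ by combining two ingredients:
\begin{itemize}
\item a trace inequality on a simplex inside $K$;
\item an $L^\infty$ argument, namely $\|w\|_{L^2(F)}\le |F|^{1/2}\|w\|_{L^\infty(K^\star)}$ together with $L^\infty$-approximation on $K^\star$.
\end{itemize}
The second ingredient is where the hypothesis $n>1+d/2$ comes from, since $\nabla\mathcal{E}u$ must be bounded. It also gives $p$-explicit bounds that stay meaningful for faces without a large simplex $S_K^F$.

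You use only the divergence-theorem trace inequality on the non-overlapping simplices $S_K^F$. Under the full polytopal-regularity assumption, and with $p$ absorbed into $\lesssim$, this is sufficient and more elementary. It also shows that $n\ge 2$ would suffice. So your remark that $n>1+d/2$ is needed ``for the traces to make sense'' is inaccurate: the trace of $\nabla e\in H^1(K)$ is already controlled by the very inequality you use.

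Two ingredients you invoke are superfluous for the estimate as stated:
\begin{itemize}
\item the bounded-overlap part of the covering condition, because the right-hand side is already a sum over the individual $K^\star$;
\item the Stein bound $\|\mathcal{E}v\|_{H^m(\mathbb{R}^d)}\lesssim\|v\|_{H^m(K)}$, because you only need $\mathcal{E}u|_K=u|_K$ and $S_K^F\subset K\subset K^\star$.
\end{itemize}

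Finally, the flux term tacitly requires $\mathbf{D}$ to have bounded traces on faces (e.g.\ piecewise smooth), which $\mathbf{D}\in L^\infty(\Omega)$ alone does not give. Likewise, $h$ in the Dirichlet penalty must be read as $h_K$, as you do; with the global $h$ that term would only give $h\,h_K^{2\min\{p+1,n\}-3}$. Both points are left implicit in the paper as well.
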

\color{black}
\section{Stability analysis of the SFK semi-discrete formulation}
\label{sec:stability}
This section aims to derive a stability analysis for the solution of the PolyDG semi-discrete  formulation \eqref{eq_semidiscreteMFK} of the SFK model \eqref{eqMFK}. 
Throughout the section, to simplify the presentation, we assume homogeneous mixed boundary conditions ($\phi_N=0$ and $c_D=0$).
\begin{theorem}[Stability result]
\label{thm:stability_estimate}
Let Assumptions~\ref{ass:reg_coeff} and \ref{ass:mesh_cangiani} be satisfied. For a sufficiently large penalty coefficient $\eta$, let $\hat{c}_h(t)$ be the solution of Equation~\eqref{eq_semidiscreteMFK} for any $t \in (0,T]$ and homogeneous mixed boundary conditions ($\phi_N=0$ and $c_D=0$). Then, the following estimates hold:
\begin{equation}
\label{eq:stability_estimate}
    \|\hat c_h(t)\|^2 + \int_0^t \dgnorm{\hat c_h(s)}^2 \ \ds + \int_0^t \|\hat c_h(s)\|^3_{L^3(\Omega)} \ \ds \lesssim \|\hat{c}_h^0\|^2 + |\Omega|t + \displaystyle\int_0^t \|f(s)\|^2 \ds,
\end{equation}
\begin{equation}
\label{eq:stability_estimate2}
\dgnorm{\hat{c}_h(t)}^2
+ \|\hat{c}_h(t)\|_{L^3(\Omega)}^3
\lesssim \|\hat{c}_h^0\|_\mathrm{DG}^2  + \|\hat{c}_{h}^0\|_{L^3(\Omega)}^3 + \|\hat{c}_h^0\|^2t + |\Omega|t^2 + (1+t) \int_0^t \|f(s)\|^2 \ds.
\end{equation}
\end{theorem}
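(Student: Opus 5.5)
For the first estimate \eqref{eq:stability_estimate} I would test \eqref{eq_semidiscreteMFK} with $w_h=\hat c_h(t)$. This gives
\[
\tfrac12\tfrac{d}{dt}\|\hat c_h\|^2+\mathscr{A}(\hat c_h,\hat c_h)-\alpha(|\hat c_h|,\hat c_h)_\Omega+\alpha(|\hat c_h|\hat c_h,\hat c_h)_\Omega=(f,\hat c_h)_\Omega .
\]
The key structural observation is that $(|\hat c_h|\hat c_h,\hat c_h)_\Omega=\int_\Omega|\hat c_h|\hat c_h^2=\|\hat c_h\|_{L^3(\Omega)}^3$. This is exactly where the absolute value helps: in the original FK scheme the cubic term $\int c_h^3$ has no sign.

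The destabilising term satisfies $\alpha(|\hat c_h|,\hat c_h)_\Omega\le\alpha\int_\Omega \hat c_h^2$. By Young's inequality with exponents $3/2$ and $3$, $\alpha\hat c_h^2\le\frac{\alpha}{2}|\hat c_h|^3+C_\alpha$ pointwise, so this term is bounded by $\frac{\alpha}{2}\|\hat c_h\|^3_{L^3}+C|\Omega|$. This is the origin of the $|\Omega|t$ contribution.

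For the forcing, $(f,\hat c_h)\le\frac12\|f\|^2+\frac12\|\hat c_h\|^2$, and $\|\hat c_h\|^2$ is again absorbed via $\hat c_h^2\le\epsilon|\hat c_h|^3+C_\epsilon$. This avoids Gronwall and any exponential-in-$t$ constant, consistent with the linear dependence on $t$ in the statement.

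Using coercivity \eqref{coercivity} and integrating on $(0,t)$ then yields \eqref{eq:stability_estimate}, with $\hat c_h(0)=c_h^0$.

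For the second estimate \eqref{eq:stability_estimate2} I would test with $w_h=\partial_t\hat c_h$. Since $\mathscr{A}$ is symmetric, $\mathscr{A}(\hat c_h,\partial_t\hat c_h)=\frac12\frac{d}{dt}\mathscr{A}(\hat c_h,\hat c_h)$. The cubic term gives
\[
(|\hat c_h|\hat c_h,\partial_t\hat c_h)_\Omega=\tfrac13\tfrac{d}{dt}\|\hat c_h\|^3_{L^3},
\]
because $\frac{d}{ds}|s|^3=3|s|s$. This is valid since $\hat c_h$ is $C^1$ in time with values in a finite-dimensional space, and $|s|^3$ is $C^1$. The remaining terms are bounded as follows:
\[
\alpha(|\hat c_h|,\partial_t\hat c_h)\le\tfrac14\|\partial_t\hat c_h\|^2+\alpha^2\|\hat c_h\|^2,\qquad (f,\partial_t\hat c_h)\le\tfrac14\|\partial_t\hat c_h\|^2+\|f\|^2,
\]
and both $\|\partial_t\hat c_h\|^2$ pieces are absorbed on the left.

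Integrating in time gives
\[
\tfrac12\mathscr{A}(\hat c_h(t),\hat c_h(t))+\tfrac{\alpha}{3}\|\hat c_h(t)\|^3_{L^3}\le \tfrac12\mathscr{A}(c_h^0,c_h^0)+\tfrac{\alpha}{3}\|c_h^0\|^3_{L^3}+\int_0^t\big(\alpha^2\|\hat c_h\|^2+\|f\|^2\big).
\]
On the left I use coercivity; on the right I use continuity \eqref{continuity} to get $\mathscr{A}(c_h^0,c_h^0)\lesssim\dgnorm{c_h^0}^2$.

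The last step is to bound $\int_0^t\|\hat c_h(s)\|^2\,ds$ using the first estimate \eqref{eq:stability_estimate} at each time $s$:
\[
\int_0^t\|\hat c_h(s)\|^2\,ds\lesssim\int_0^t\Big(\|c_h^0\|^2+|\Omega|s+\int_0^s\|f\|^2\Big)ds\le t\|c_h^0\|^2+|\Omega|t^2+t\int_0^t\|f\|^2.
\]
These are exactly the terms $\|\hat c_h^0\|^2t$, $|\Omega|t^2$ and $(1+t)\int_0^t\|f\|^2$ in \eqref{eq:stability_estimate2}.

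The main obstacle I expect is the justification of the chain-rule identity for the nonsmooth term $|\hat c_h|\hat c_h$, together with the well-posedness and regularity of $\hat c_h$ needed to use $\partial_t\hat c_h$ as a test function. The nonlinearity is only locally Lipschitz, so I would argue via the finite-dimensional ODE system, which gives a local solution. The first energy bound then prevents blow-up and gives a global solution with $\hat c_h\in C^1([0,T];\Wh)$, which justifies both testing choices.
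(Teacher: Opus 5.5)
Your proposal is correct and follows essentially the same route as the paper. First you test with $\hat c_h$ and absorb $\|\hat c_h\|^2$ into the $L^3$ term via H\"older/Young; your pointwise version with an explicit small constant makes precise the absorption that the paper only writes with $\lesssim$. Then you test with $\partial_t\hat c_h$, use the symmetry of $\mathscr{A}$ and $\tfrac{d}{dt}\tfrac13\|\hat c_h\|_{L^3(\Omega)}^3=(|\hat c_h|\hat c_h,\partial_t\hat c_h)_\Omega$, and feed the first estimate into $\int_0^t\|\hat c_h(s)\|^2\,\mathrm{d}s$, exactly as the paper does. The only small inaccuracy is that with merely $f\in L^2((0,T];L^2(\Omega))$ the semi-discrete solution is absolutely continuous in time rather than $C^1$; the chain rule still holds a.e.\ and the argument is unaffected.
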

Before proving Theorem~\ref{thm:stability_estimate}, we state the following remarks
\begin{remark}
Under the assumption of $f=0$, the stability estimate \eqref{eq:stability_estimate} of Theorem~\ref{thm:stability_estimate} becomes:
\begin{equation}
    \|\hat c_h(t)\|^2 + \int_0^t \dgnorm{\hat c_h(s)}^2 \ \ds + \int_0^t \|\hat  c_h(s)\|^3_{L^3(\Omega)} \ \ds \lesssim \|\hat{c}_h^0\|^2 + |\Omega| t = \hat{C}_S(\hat{c}_h^0, |\Omega|,t),
    \label{eq::stab_estim_f0}
\end{equation}
while the stability estimate \eqref{eq:stability_estimate2} reduces to:
\begin{equation}
    \dgnorm{\hat{c}_h(t)}^2
    + \|\hat{c}_h(t)\|_{L^3(\Omega)}^3
    \lesssim \|\hat{c}_h^0\|_\mathrm{DG}^2  + \|\hat{c}_{h}^0\|_{L^3(\Omega)}^3 + t\|\hat{c}_h^0\|^2 + |\Omega|t^2 = \widetilde{C}_{S}(\hat{c}_h^0, |\Omega|,t).
    \label{eq::stab_estim2_f0}
\end{equation}
The definitions of $\hat{C}_S$ and $\widetilde{C}_{S}$ will be used in the following analysis.
\end{remark}
\begin{remark}
The stability estimates \eqref{eq:stability_estimate} and \eqref{eq:stability_estimate2} are established for general polygonal and polyhedral meshes satisfying Assumption~\ref{ass:mesh_cangiani}, in the spirit of~\cite{cangiani_hp-version_2017}. The changes in the proof derive from the use of the absolute value of $c$, that creates an $L^3(\Omega)$-norm positive contribution at the LHS. This represents a substantial advance over the stability analysis for the FK model in~\cite{corti_discontinuous_2023}, which relied on more restrictive assumptions on the mesh \cite{dipietro_hybrid_2020}. In addition, the stability result for the SFK model is global in time, and the dependence on the final time~$t$ in the energy norm is linear rather than exponential as in \cite{corti_discontinuous_2023}, ensuring significantly better control for large times. The analysis can be extended to full Neumann boundary conditions by using the Sobolev-Poincaré-Wirtinger inequality from \cite{botti_sobolev_2026}, without loss of generality.
\eremk
\end{remark}
\begin{proof}{(Proof of Theorem~\ref{thm:stability_estimate}). }\textbf{Proof of estimate \eqref{eq:stability_estimate}.}
Let us consider Equation~\eqref{eq_semidiscreteMFK} and choose $w_h = \hat{c}_h(s)$:
\begin{equation*}
\left(\displaystyle\frac{\partial \hat{c}_{h}}{\partial t}, \hat{c}_{h} \right)_{\Omega} + \mathscr{A}(\hat{c}_{h},\hat{c}_{h}) - \alpha(|\hat{c}_{h}|,\hat{c}_{h})_\Omega + \alpha(|\hat{c}_{h}|,\hat{c}_{h}^2)_\Omega = F(\hat{c}_{h}).
\end{equation*}
After an integration in time in $(0,t)$ and using the coercivity estimate in \eqref{coercivity} and the H\"{o}lder inequality on the forcing term we obtain:
\begin{align*}
     \|\hat{c}_h(t)\|^2 + \displaystyle\int_0^t \left(\|\hat{c}_h(s)\|_\mathrm{DG}^2 + \|\hat{c}_h(s)\|_{L^{3}(\Omega)}^3 \right) \ds & \lesssim \|\hat{c}_h^0\|^2 + \displaystyle\int_0^t \big(\|\hat{c}_h(s)\|^2 + \|f(s)\| \|\hat{c}_h(s)\| \big) \ds \\
     & \lesssim \|\hat{c}_h^0\|^2 + \displaystyle\int_0^t \big(\|\hat{c}_h(s)\|^2 + \|f(s)\|^2 \big) \ds,
\end{align*}
where in the last step we used the Young inequality. The latter step that is needed is to bound the term $\|\hat{c}_h(s)\|^2$ on the right hand side. The latter can be rewritten using both H\"{o}lder and Young's inequalities:
\begin{equation*}
\|\hat{c}_h\|^2 \leq \|\hat{c}_h\|^2_{L^3(\Omega)} |\Omega|^{\frac{1}{3}} \lesssim \|\hat{c}_h\|^3_{L^3(\Omega)} + |\Omega|,
\end{equation*}
where $|\Omega|$ denotes the measure of the domain. Substituting the result we obtain:
\begin{equation*}
      \|c_h(t)\|^2 + \int_0^t \dgnorm{c_h(s)}^2 \ \ds + \int_0^t \|c_h(s)\|^3_{L^3(\Omega)} \ \ds \lesssim \|\hat{c}_h^0\|^2 + |\Omega|t + \displaystyle\int_0^t \|f(s)\|^2 \ds.
\end{equation*}
\par
\noindent
\textbf{Proof of estimate \eqref{eq:stability_estimate2}.}
Let us consider Equation~\eqref{eq_semidiscreteMFK} and choose $w_h = \dot{\hat{c}}_h = \displaystyle\frac{\partial \hat{c}_h}{\partial t}$:
\begin{equation*}
\|\dot{\hat{c}}_{h}\|^2 + \mathscr{A}(\hat{c}_{h},\dot{\hat{c}}_{h}) - \alpha(|\hat{c}_{h}|,\dot{\hat{c}}_{h})_\Omega + \alpha(|\hat{c}_{h}|\hat{c}_h,\dot{\hat{c}}_{h})_\Omega = F(\dot{\hat{c}}_{h}).
\end{equation*}
We can observe that due to the symmetry of $\mathscr{A}$:
\begin{align*}
    \int_0^t \mathscr{A}(\hat{c}_{h}(s),\dot{\hat{c}}_{h}(s)) \ds & \,= \frac{1}{2}\mathscr{A}(\hat{c}_{h}(t),\hat{c}_{h}(t)) - \frac{1}{2}\mathscr{A}(\hat{c}_{h}^0,\hat{c}_{h}^0), \\
\int_0^t \alpha(|\hat{c}_{h}(s)|\hat{c}_h(s),\dot{\hat{c}}_{h}(s))_\Omega & \, = \frac{\alpha}{3}\|\hat{c}_{h}(t)\|_{L^3(\Omega)}^3 - \frac{\alpha}{3}\|\hat{c}_{h}^0\|_{L^3(\Omega)}^3.
\end{align*}
Then, after an integration in time in $(0,t)$ and using the coercivity and continuity estimates in \eqref{coercivity} and \eqref{continuity} we obtain:
\begin{align*}
     \|\hat{c}_h(t)\|_\mathrm{DG}^2 &  \,+
     \|\hat{c}_{h}(t)\|_{L^3(\Omega)}^3 +
     \int_0^t \|\dot{\hat{c}}_h(s)\|^2 \ds \\ & \,\lesssim \|\hat{c}_h^0\|_\mathrm{DG}^2  + \|\hat{c}_{h}^0\|_{L^3(\Omega)}^3 + \int_0^t (|\hat{c}_{h}(s)|,\dot{\hat{c}}_{h}(s))_\Omega \ds + \int_0^t (f(s),\dot{\hat{c}}_{h}(s))_\Omega \ds \\ & \, \lesssim \|\hat{c}_h^0\|_\mathrm{DG}^2  + \|\hat{c}_{h}^0\|_{L^3(\Omega)}^3 + \int_0^t \left(\|\hat{c}_{h}(s)\|^2 + \|f(s)\|^2 + \|\dot{\hat{c}}_{h}(s)\|^2\right) \ds,
\end{align*}
where in the last steps we used the H\"older and Young inequalities. After simplification of $\|\dot{\hat{c}}_h(s)\|^2$, we use the stability estimate \eqref{eq:stability_estimate} to bound $\|\hat{c}_h(s)\|$ on the right hand side:
\begin{equation*}
     \|\hat{c}_h(t)\|_\mathrm{DG}^2  + \|\hat{c}_{h}(t)\|_{L^3(\Omega)}^3  \lesssim \|\hat{c}_h^0\|_\mathrm{DG}^2  + \|\hat{c}_{h}^0\|_{L^3(\Omega)}^3 + t\|\hat{c}_h^0\|^2 + |\Omega|t^2 + (1+t) \int_0^t \|f(s)\|^2 \ds.
\end{equation*}
\end{proof}

\section{\textit{A-priori} error analysis of the SFK semi-discrete formulation}
\label{sec:errorAnalysis}
In this section, we derive an \textit{a-priori} error estimate for the solution of the semi-discrete PolyDG formulation \eqref{eq_semidiscreteMFK}. We assume a homogeneous forcing term $f=0$ and homogeneous mixed boundary conditions, i.e., $\phi_N=0$ and $c_D=0$, respectively.  
Before stating the main result of this section, we recall that, under the above assumptions on the data, at the continuous level (see \cite{smoller_shock_1994}), if the initial condition $c_0(\boldsymbol{x}) \in (0,1)$ and for homogeneous mixed boundary conditions $\phi_N=0$, $c_D=0$, and $f=0$, then for each $(\boldsymbol{x},t) \in \Omega\times(0,T]$, we get that:
\begin{equation}
    \|c(t)\|^2_{L^\infty(\Omega)} \leq 1 \quad \forall t \in (0,T)
\label{upperlimitc}
\end{equation}
Next, for any $t$, given $c \in H^n(\mathscr{T}_h)$, $n > 1 + d/2$, we defined the interpolant $c_I$ as in Proposition~\ref{propInt}. By construction,  $c_I$ belongs to the discrete space $\Wh$.
This guarantees that $c_I$ is bounded in the $L^\infty$-norm, that is $\exists M_I >0$ such that
\begin{equation}
\|c_I(t)\|_{L^{\infty}(\Omega)} \leq M_I
\qquad \forall t \in (0,T) ,
\label{upperlimitcI}
\end{equation}
with $M_I$ independent of $h$, provided that the mesh size is small enough, i.e. $h<1$.
Indeed, we have that, using the triangular inequality $c_I = (c_I - c) + c$, \cite[Lemma 20]{cangiani_hp-version_2017}, and \eqref{upperlimitc}, we obtain
\begin{align*}
     \|c_I\|_{L^{\infty}(\Omega)} & \leq \|c\|_{L^{\infty}(\Omega)} + \|c_I-c\|_{L^{\infty}(\Omega)} \\
     & = \|c\|_{L^{\infty}(\Omega)} + \     \underset{K\in\mathcal{T}_h}{\mathrm{max}}\left\{\|c_I-c\|_{L^{\infty}(K)}\right\} \\
     &  \leq \|c\|_{L^{\infty}(\Omega)} + \     \underset{K\in\mathcal{T}_h}{\mathrm{max}}\left\{\|c_I-c\|_{L^{\infty}(K^\star)}\right\} \\
     & \lesssim 1 + \underset{K\in\mathcal{T}_h}{\mathrm{max}} \left\{h_{K^\star}^{\text{min}\left\{p+1,n\right\} - d/2} \|\mathcal{E} c\|_{H^n(K^\star)}\right\} \leq M_I,
\end{align*}
under the assumption that $h \leq 1$.

We are now ready to state the main result of this section.
\begin{theorem}
\label{th:a-priori-error-bound}
Let us consider problem \eqref{eqMFK} with $f=0$, $\phi_N = 0$, and $c_D = 0$.
Let Assumption~\ref{ass:reg_coeff} be satisfied and let $\hat{c}$ be the weak solution of \eqref{eqMFK} for any $t \in (0,T]$.
Assume that it satisfies the following additional regularity:
\begin{equation}    
    \hat{c} \in C^1([0,T]; H^n(\Omega) \cap L^\infty(\Omega)),
\label{addregul}
\end{equation}
where $n > 1 + d/2$. Let us also assume further regularity on the initial condition $c_0 \in W_0$ and such that $c_0(\boldsymbol{x})\in[0,1]$ for any $\boldsymbol{x}\in\Omega$. Let Assumption~\ref{ass:mesh_cangiani} be satisfied. For a sufficiently large penalty coefficient $\eta$, let $\hat{c}_h$ be the solution of \eqref{eq_semidiscreteMFK} for any $t \in (0,T]$. Then, for any $t\in (0,T]$, it holds:
\begin{equation}
\begin{split}
\|\hat{c}(t) - \hat{c}_h(t)\|^2 + \int_0^t \tdgnorm{\hat{c}(s) - \hat{c}_h(s)}^2 \ds &
    \lesssim \sum_{K \in \mathscr{T}_{h}} h_K^{2 \text{min} \left\{p+1, n\right\} -2} \|\hat{c}(t)\|_{H^n(K)}^2 \\ + & \sum_{K \in \mathscr{T}_{h}} h_K^{2 \text{min} \left\{p+1, n\right\} -2} \int_0^t \left(\|\dot{\hat{c}}(s)\|_{H^n(K)}^2 + \|\hat{c}(s)\|_{H^n(K)}^2 \right) \ds,
\end{split} 
    \label{estimatedglobalerror}
\end{equation}
provided $\mu \gtrsim \alpha$, where the hidden constant depends on the interpolation constant $M_I$ of Equation~\eqref{upperlimitcI}, the stability constant $\widetilde{C}_S$ defined in Equation~\eqref{eq::stab_estim2_f0} and the discrete Sobolev embedding constant $C_{E_q}$ for the $L^q(\Omega)$ spaces in \cite{botti_sobolev_2026}.
\label{thmerror}
\end{theorem}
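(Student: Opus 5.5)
We follow the standard error-splitting approach for semi-discrete DG schemes. Write $\hat c - \hat c_h = (\hat c - c_I) + (c_I - \hat c_h) =: e_I + e_h$, where $c_I(t)\in\Wh$ is the interpolant of Proposition~\ref{propInt}. Since $e_I$ is controlled directly by \eqref{interp}, and $\dot c_I$ is the interpolant of $\dot{\hat c}$ by the regularity \eqref{addregul}, the task reduces to bounding $e_h\in\Wh$.

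\textbf{Error equation.} Strong consistency of the SIP form for $\hat c\in H^n$, $n>1+d/2$, shows that $\hat c$ satisfies \eqref{eq_semidiscreteMFK} with $w_h\in\Wh$. Subtracting the discrete problem gives
\begin{equation*}
(\dot e_h,w_h)_\Omega+\mathscr{A}(e_h,w_h) = -(\dot e_I,w_h)_\Omega-\mathscr{A}(e_I,w_h)+\alpha(|\hat c|-|\hat c_h|,w_h)_\Omega-\alpha(|\hat c|\hat c-|\hat c_h|\hat c_h,w_h)_\Omega .
\end{equation*}
We take $w_h=e_h$, integrate over $(0,t)$, and use coercivity \eqref{coercivity} on the left-hand side.

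\textbf{Estimating the right-hand side.} The linear terms are routine. For the time derivative, $|(\dot e_I,e_h)|\le \|\dot e_I\|^2+\tfrac14\|e_h\|^2$. For the bilinear form, \eqref{continuity2} gives $|\mathscr{A}(e_I,e_h)|\lesssim \tdgnorm{e_I}\dgnorm{e_h}\le \epsilon\dgnorm{e_h}^2+C_\epsilon\tdgnorm{e_I}^2$. The linear reaction term is Lipschitz, since $||\hat c|-|\hat c_h||\le|\hat c-\hat c_h|\le |e_I|+|e_h|$, so it contributes $\alpha(\|e_I\|^2+\|e_h\|^2)$. This is the term requiring $\mu\gtrsim\alpha$ if one wants to absorb it without Gronwall; otherwise Gronwall handles it.

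\textbf{Main obstacle: the quadratic term.} Set $g(x)=|x|x$, which satisfies $g'(x)=2|x|$. By the mean value theorem,
\begin{equation*}
|g(\hat c)-g(\hat c_h)|\le 2\max(|\hat c|,|\hat c_h|)\,|\hat c-\hat c_h| .
\end{equation*}
Split $g(\hat c)-g(\hat c_h)=[g(\hat c)-g(c_I)]+[g(c_I)-g(\hat c_h)]$. The first bracket is bounded by $2(\|\hat c\|_{L^\infty}+M_I)|e_I|$, using \eqref{upperlimitc} and \eqref{upperlimitcI}. For the second bracket, monotonicity of $g$ gives $(g(c_I)-g(\hat c_h))e_h\ge0$, and this term enters with a minus sign, so it has a favourable sign and can simply be dropped. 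This monotonicity is the key structural gain from the absolute value, and we expect it to be the crux.

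If we instead treat the second bracket without relying on the sign, we bound $\int |\hat c_h||e_h|^2\le\|\hat c_h\|_{L^3}\|e_h\|_{L^3}^2$. We then use the stability bound \eqref{eq::stab_estim2_f0}, namely $\|\hat c_h\|_{L^3}^3\lesssim\widetilde C_S$, together with the discrete Sobolev embedding $\|e_h\|_{L^3}\le C_{E_3}\dgnorm{e_h}$ from \cite{botti_sobolev_2026} (or an interpolation between $L^2$ and the DG norm). This yields a term that is absorbed into the coercivity term, or handled by Gronwall. The cross term with $e_I$ is bounded in the same way by $\|e_I\|$ times $L^\infty$ or $L^3$ bounds.

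\textbf{Conclusion.} Collecting the estimates and absorbing the $\epsilon$-terms, we obtain
\begin{equation*}
\|e_h(t)\|^2+\mu\int_0^t\dgnorm{e_h}^2\lesssim \|e_h(0)\|^2+\int_0^t\bigl(\|e_h\|^2+\|\dot e_I\|^2+\tdgnorm{e_I}^2\bigr)\,\ds .
\end{equation*}
We apply Gronwall, with $e_h(0)=0$ by choosing $c_h^0=c_I(0)$ or an $L^2$ projection, and then the interpolation estimate \eqref{interp}. The bound $\tdgnorm{e_h}\lesssim\dgnorm{e_h}$ on $\Wh$ and the triangle inequality with $e_I$ then give \eqref{estimatedglobalerror}.
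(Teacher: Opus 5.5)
Your proof is correct, and it departs from the paper's only at the quadratic term, which is the heart of the argument.

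The paper splits $\hat c|\hat c|-\hat c_h|\hat c_h|$ into four pieces (I)--(IV). The sum (I)+(II) is your first bracket $g(\hat c)-g(c_I)$, with $g(x)=x|x|$, and the paper bounds it as you do, through $\|\hat c\|_{L^\infty}$ and $M_I$. The other two pieces, (III)$=|c_I|\,\hat e_h$ and (IV)$=\hat c_h(|c_I|-|\hat c_h|)$, are estimated separately in absolute value:
\begin{itemize}
\item (III) by $M_I C_{E_2}\dgnorm{\hat e_h}^2$;
\item (IV) by $\|\hat c_h\|_{L^3}\|\hat e_h\|_{L^3}^2\le\widetilde C_S^{1/3}C_{E_3}\dgnorm{\hat e_h}^2$.
\end{itemize}
Both are then absorbed into the coercivity term. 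That absorption is what produces the hypothesis $\mu\gtrsim\alpha$ and the dependence on $\widetilde C_S$ and $C_{E_q}$ in the statement.

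You instead observe that (III)+(IV)$=g(c_I)-g(\hat c_h)$. Tested against $\hat e_h=c_I-\hat c_h$, this is pointwise nonnegative because $g$ is nondecreasing. Since it enters with a minus sign, it can simply be dropped.

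This buys you several things.
\begin{itemize}
\item You do not need the stability estimate of Theorem~\ref{thm:stability_estimate}.
\item You do not need the discrete Sobolev embeddings. In the form $\|v\|_{L^q}\lesssim\dgnorm{v}$ these require a Dirichlet portion of the boundary, which is absent in the Neumann brain simulations.
\item You need no size condition relating $\mu$ and $\alpha$. The remaining $\alpha\|e_h\|^2$ term goes to Gr\"onwall, which the paper uses anyway.
\item Your hidden constant depends only on $\|\hat c\|_{L^\infty}$, $M_I$, $\alpha$, $T$ and the continuity and coercivity constants. This strengthens the theorem.
\item You avoid the fact that $\widetilde C_S$ grows like $t^2$, which makes the paper's absorption condition effectively time-dependent.
\end{itemize}

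The paper's route uses the absolute value only indirectly, through the $L^3$ bound it gives on $\hat c_h$. That route is the template for a non-monotone nonlinearity that still enjoys an a priori $L^3$ bound. For this particular $g$, your monotonicity argument is shorter and sharper.

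One small slip: if $c_h^0$ is the $L^2$ projection rather than $c_I(0)$, then $e_h(0)\neq 0$. This is harmless, since $\|e_h(0)\|\le 2\|e_I(0)\|$ is just one more interpolation term.
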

\begin{proof}
We subtract Equation~\eqref{eq_semidiscreteMFK} from the weak formulation of \eqref{eqMFK} to obtain:
\begin{equation*}
    \left(\dot{\hat{c}} - \dot{\hat{c}}_{h}, w_{h}\right)_{\Omega} + \mathscr{A} \left(\hat{c} - \hat{c}_{h},w_{h}\right) - \alpha\left(|\hat{c}| - |\hat{c}_{h}|,w_{h}\right)_{\Omega} + \alpha\left(\hat{c} |\hat{c}| - \hat{c}_{h} |\hat{c}_{h}|,w_{h}\right)_{\Omega} = 0 \quad \forall w_{h} \in \Wh.
\end{equation*}
\noindent Then, we define the errors $ \hat{e}_{h}^{c} = c_{I} - \hat{c}_{h}$ and $e_{I}^{c} = \hat{c} - c_{I}$, with $c_{I}$ being a suitable interpolant.
By setting $w_h = \hat{e}_{h}^{c} $:
\begin{equation*}
    \left(\dot{\hat{e}}_{h}^{c},\hat{e}_{h}^{c}\right)_{\Omega} + \mathscr{A}\left(\hat{e}_{h}^{c}, \hat{e}_{h}^{c}\right) - \alpha\left(|\hat{c}| - |\hat{c}_h|, \hat{e}_{h}^{c}\right)_{\Omega} + \alpha \left(\hat{c} |\hat{c}| - \hat{c}_{h} |\hat{c}_{h}|, \hat{e}_{h}^{c}\right)_{\Omega} = -\left(\dot{e}_{I}^{c},\hat{e}_{h}^{c}\right)_{\Omega} - \mathscr{A}\left(e_{I}^{c}, \hat{e}_{h}^{c}\right).
\end{equation*}
We remark that $\hat{e}_{h}^{c}(0) = 0$ by setting $ \hat{c}_{h}^{0} = c_I(0)$ and, by using the symmetry of the scalar product,
\begin{equation*}
\left(\dot{\hat{e}}_{h}^{c},\hat{e}_{h}^{c}\right)_{\Omega} = \frac{1}{2} \displaystyle\frac{\mathrm{d}}{\mathrm{d}t} \left( \hat{e}_{h}^{c}, \hat{e}_{h}^{c}\right)_{\Omega} = \frac{1}{2} \frac{\mathrm{d}}{\mathrm{d}t} \| \hat{e}_{h}^{c}\|^2.
\end{equation*}
We integrate in time between $0$ and a generic time instant $t$ such that:
\begin{align*}
\|\hat{e}_h^{c}(t)\|^{2}
&\;+ \int_{0}^{t} \mu \dgnorm{\hat{e}_{h}^{c}(s)}^{2} \ \ds
\lesssim
\int_{0}^{t} |(\dot{e}_I^c (s), \hat{e}_h^c (s))_\Omega| \ds - \int_{0}^{t} |\mathscr{A}(e_I^c(s), \hat{e}_h^c(s))| \ds \\
&\;+ \int_{0}^{t} \alpha\left( |\hat{c}(s)| - |\hat{c}_h(s)|,\hat{e}_h^{c}(s)\right)_\Omega \ds
+ \int_{0}^{t} \alpha \left( \hat{c}(s)|\hat{c}(s)| - \hat{c}_h(s) |\hat{c}_h(s)|, \hat{e}_h^c(s)\right)_{\Omega} \ds
\end{align*}
Now, we can use Equation~\eqref{continuity} and H\"{o}lder inequality to obtain
\begin{align}
\nonumber  \|\hat{e}_h^{c}(t)\|^{2}
&\;+ \int_{0}^{t} \mu \dgnorm{\hat{e}_{h}^{c}(s)}^{2} \ \ds
\lesssim
\int_{0}^{t} \|\dot{e}_I^c (s)\|\,\|\hat{e}_h^c (s)\| \ds + \int_{0}^{t} \tdgnorm{e_I^c(s)}\,\dgnorm{\hat{e}_h^c(s)} \ds \\
&\;+ \int_{0}^{t} \alpha\underbrace{\left( |\hat{c}(s)| - |\hat{c}_h(s)|,\hat{e}_h^{c}(s)\right)_\Omega}_{(1)} \ds
+ \int_{0}^{t} \alpha \underbrace{\left( \hat{c}(s)|\hat{c}(s)| - \hat{c}_h(s) |\hat{c}_h(s)|, \hat{e}_h^c(s)\right)_{\Omega}}_{(2)} \ds
\label{eq:partial_res}
\end{align}
The integrand (1) can be bounded by combining the triangular and Cauchy-Schwarz inequalities:
\begin{align*}
\left(|\hat{c}| - |\hat{c}_h|,\hat{e}_h^{c}\right)_\Omega
&\,= \int_{\Omega} (|\hat{c}| - |\hat{c}_h|) \hat{e}_h^c
\leq \int_{\Omega} \left| |\hat{c}| - |\hat{c}_h| \right|\, |\hat{e}_{h}^c|
\leq \int_{\Omega} |\hat{c} - \hat{c}_h |\, |\hat{e}_h^c|
\\ &\,= \int_{\Omega} |\underbrace{\hat{c} - c_I}_{= \  e_I^c} + \underbrace{c_I - \hat{c}_h}_{= \ \hat{e}_h^c}| \; |\hat{e}_h^c| \leq \int_{\Omega} (|e_I^c| + |\hat{e}_h^c|) |\hat{e}_h^c| \leq
 \|e_I^c\| \ \|\hat{e}_h^c\| + \|\hat{e}_h^c\|^2.
\end{align*}
Moreover, the integrand denoted by (2) can be manipulated as follows:
\begin{align*}
\hat{c}|\hat{c}| - \hat{c}_h |\hat{c}_h| & \, = \hat{c}|\hat{c}| - \hat{c}|c_I| + \hat{c}|c_I| - c_I |c_I| + c_I |c_I| - |c_I| \hat{c}_h + |c_I| \hat{c}_h -|\hat{c}_h| \hat{c}_h \\ & \, =
\underbrace{\hat{c}(|\hat{c}| - |c_I|)}_{\mathrm{(I)}} + \underbrace{|c_I| (\hat{c} - c_I)}_{\mathrm{(II)}} + \underbrace{|c_I|(c_I - \hat{c}_h)}_{\mathrm{(III)}} + \underbrace{\hat{c}_{h} (|c_I| - |\hat{c}_h|)}_{\mathrm{(IV)}}.
\end{align*}
Each resulting term can be treated in the following way:
\begin{itemize}
    \item The term (I) can be bounded using the triangular and Cauchy-Schwarz inequalities and the bound of continuous solution \eqref{upperlimitc}:
    \begin{equation*}
        |(\mathrm{I})| \leq \left|\left(\hat{c}(|\hat{c}|-|c_I|),\hat{e}_h^c \right)_{\Omega} \right| \leq  \|\hat{c}\|_{L^{\infty}(\Omega)} \|\hat{c}-c_I\| \, \|\hat{e}_{h}^{c}\| \leq \|{e}_{I}^{c}\| \, \|\hat{e}_{h}^{c}\|.
    \end{equation*}
    \item The term (II) can be bounded using the Cauchy-Schwarz inequality and the interpolant bound \eqref{upperlimitcI}:
    \begin{equation*}
        |(\mathrm{II})| \leq \left|\left(|c_I|(\hat{c}-c_I),\hat{e}_h^c \right)_{\Omega} \right| \leq  \|c_I\|_{L^{\infty}(\Omega)} \|\hat{c}-c_I\| \, \|\hat{e}_{h}^{c}\| \leq M_I \|{e}_{I}^{c}\| \, \|\hat{e}_{h}^{c}\|.
    \end{equation*}
    \item The term (III) can be bounded using the Cauchy-Schwarz inequality, the bound of interpolant \eqref{upperlimitcI}, and the Sobolev-Poincar\'e-Wirtinger inequality with constant $C_{E_2}$ \cite{botti_sobolev_2026}:
    \begin{equation*}
        |(\mathrm{III})| \leq \left|\left(|c_I|(c_I-\hat{c}_h),\hat{e}_h^c \right)_{\Omega} \right| \leq  \|c_I\|_{L^{\infty}(\Omega)} \|\hat{e}_{h}^{c}\|^2 \leq M_I C_{E_2} \dgnorm{\hat{e}_{h}^{c}}^2.
    \end{equation*}
    \item The term (IV) can be bounded using the stability estimate of the $L^3(\Omega)$-norm of the solution \eqref{eq::stab_estim2_f0}, the triangular, H\"{o}lder, and the Sobolev-Poincar\'e-Wirtinger inequality with constant $C_{E_3}$\cite{botti_sobolev_2026}:
    \begin{equation*}
        |(\mathrm{IV})|
        \leq \left|\left(\hat{c}_h(|c_I|-|\hat{c}_h|),\hat{e}_h^c \right)_{\Omega} \right|
        \leq \left|\left(\hat{c}_h,(\hat{e}_h^c)^2 \right)_{\Omega}\right|
        \leq \|\hat{c}_h\|_{L^3(\Omega)} \|\hat{e}_{h}^{c}\|^2_{L^3(\Omega)} \leq \widetilde{C}_{S}^{\frac{1}{3}} C_{E_3} \dgnorm{\hat{e}_{h}^{c}}^2.
    \end{equation*}
\end{itemize}
Then, Equation~\eqref{eq:partial_res} using the equivalence of the DG-norms in $\Wh$ and provided $\mu \gtrsim \alpha$ becomes:
\begin{equation*}
\|\hat{e}_h^{c}(t)\|^{2}
+ \int_{0}^{t} \tdgnorm{\hat{e}_{h}^{c}(s)}^{2} \ \ds
\;\lesssim
\int_{0}^{t} \left(\|\dot{e}_I^c (s)\|+ \|e_I^c(s)\|\right)\|\hat{e}_h^c (s)\| \ds + \int_{0}^{t} \tdgnorm{e_I^c(s)}\dgnorm{\hat{e}_h^c(s)} \ds.
\end{equation*}
Using once again H\"{o}lder inequality, Gr\"{o}nwall's lemma \cite{quarteroni_numerical_2017} and the estimate in \eqref{interp}, we obtain:
\begin{align*}
\|\hat{e}_h^{c}(t)\|^{2}
+ \int_{0}^{t} \tdgnorm{\hat{e}_{h}^{c}(s)}^{2} \ \ds &\; \lesssim
\int_{0}^{t} \left(\|\dot{e}_I^c (s)\|^2+\|e_I^c (s)\|^2+\dgnorm{e_I^c(s)}^2\right) \ds \\
&\; \lesssim \sum_{K \in \mathscr{T}_{h}} h_K^{2 \text{min} \left\{p+1, n\right\} -2} \int_0^t \left(\|\dot{\hat{c}}(s)\|_{H^n(K)}^2 + \|\hat{c}(s)\|_{H^n(K)}^2 \right) \ \ds.
\end{align*}
Finally, using the triangular inequality, which concludes the proof.
\end{proof}

\section{Fully-discrete SFK formulation}
\label{sec:discreteMFK}
Let $\left\{\varphi_j\right\}_{j=0}^{N-1}$ be a suitable basis for $\Wh$, where $N$ is the dimension of the space $\Wh$. Then, $\hat{c}_h(t) = \displaystyle\sum_{j=0}^{N-1} C_j(t) \varphi_j$ and we denote by $\mathbf{C} \in \mathbb{R}^N$ the corresponding vector of expansion coefficients in the chosen basis. 
We define the following components $\forall i,j = 0, ..., N-1$:
\begin{alignat*}{2}
    [\mathbf{M}]_{ij} &= (\varphi_j, \varphi_i)_\Omega 
    &&\quad (\text{Mass matrix}) \\
    [\mathbf{A}]_{ij} &= \mathscr{A}(\varphi_j, \varphi_i) 
    &&\quad (\text{Stiffness matrix}) \\[4pt]
    [\mathbf{R}(\mathbf{C}(t))]_{i} &= (\alpha |\hat{c}_h(t)|, \varphi_i)_\Omega 
    &&\quad (\text{non-linear reaction vector}) \\
    [\mathbf{\widetilde{R}} (\mathbf{C}(t))]_{ij} &= (\alpha |\hat{c}_h(t)| \varphi_j, \varphi_i)_\Omega 
    &&\quad (\text{non-linear reaction matrix})
\end{alignat*}
Moreover, we define the forcing term $[\mathbf{F}]_i = F(\varphi_i)$ $\forall i = 0, ..., N-1$. Then problem \eqref{eq_semidiscreteMFK} can be written as:
\begin{align}
    \mathbf{M}\dot{\mathbf{C}}(t) + \mathbf{A} \mathbf{C}(t) - \mathbf{R}(\mathbf{C}(t)) + \mathbf{\widetilde{R}}(\mathbf{C}(t)) \mathbf{C}(t) & = \mathbf{F}(t), \quad \forall t \in (0,T] \\
    \mathbf{C}(0) & = \mathbf{C}^0.
\label{fullydiscr}
\end{align}
We define a partition of the time interval $[0,T]$ into $N_T$ subintervals $[t_{n}, t_{n+1}]$ for $n=0, \dots, N_T - 1$, with time step $\Delta t = t_{n+1}-t_n$. We employ a semi-implicit Crank--Nicolson scheme. Given the initial conditions $\mathbf{C}^{-1},\mathbf{C}^0$, for $n=0, \dots, N_T - 1$, find $\mathbf{C}^{n+1} \approx \mathbf{C}(t_{n+1})$ such that:
\begin{equation}
    \left( \mathbf{M} + \frac{\Delta t}{2} \bigl(\mathbf{A} + \mathbf{\widetilde{R}}(\mathbf{C}^*)\bigr) \right) \mathbf{C}^{n+1} = \left( \mathbf{M} - \frac{\Delta t}{2} \bigl(\mathbf{A} + \mathbf{\widetilde{R}}(\mathbf{C}^*)\bigr) \right) \mathbf{C}^{n} + \Delta t \mathbf{R}(\mathbf{C}^*) + \frac{\Delta t}{2}(\mathbf{F}^{n} + \mathbf{F}^{n+1}),
\label{eq:fully_discrete_cn}
\end{equation}
where the semi-implicit extrapolation term $\mathbf{C}^*$ is defined as:
\begin{equation}
    \mathbf{C}^* = 
    \begin{cases}
        \mathbf{C}^0, & \text{for } n = 0, \\[4pt]
        \dfrac{3}{2} \mathbf{C}^n - \dfrac{1}{2} \mathbf{C}^{n-1}, & \text{for } n \ge 1.
    \end{cases}
\end{equation}

\section{Numerical results: verification of the SFK model}
\label{sec:verification}
In this section, we verify the theoretical results of the previous sections by presenting a convergence analysis based on a manufactured solution in both two-dimensional (2D) and three-dimensional (3D) settings.

\subsection{Test Case 1: verification in 2D}
We design a convergence test to verify the theoretical bounds derived in the previous sections. For the 2D numerical tests, we use the MATLAB library \texttt{lymph} \cite{lymph}, which implements the SFK solver on polytopal meshes. We consider a unit square domain $\Omega = (0,1)^2$ and the corresponding mesh has been generated with PolyMesher \cite{talischi_polymesher_2012}. In this test, the manufactured solution is given by:
\begin{equation*}
    \hat{c}(x,y,t) = \frac{1}{4}\left(\cos(\pi x) \cos(\pi y) + 2\right) e^{-t}.
\end{equation*}
To simplify the problem, we assume isotropic diffusion, i.e.\ $\mathbf{D} = \mathbf{I}$, which yields $\alpha = 1$ and $\eta_0 = 10$. The forcing term $f$ and the non-homogeneous Dirichlet boundary conditions are derived from the exact solution. We employ a fixed time-step $\Delta t = 10^{-5}$ and final time $T = 10^{-3}$ for both $h$- and $p$-convergence studies. For the $h$-convergence analysis, we consider four meshes with $N_{\mathrm{el}} = 30, 100, 300, 1000$ elements and polynomial degrees $p = 1, \ldots, 6$. For the $p$-convergence study, we fix the mesh with $N_{\mathrm{el}} = 30$ elements and vary the polynomial degree in the range $p = 1, \ldots, 8$.
\par
Figure~\ref{fig:hconv2D} displays the errors at final time $T$ as functions of the mesh size for polynomial degrees $p = 1, \ldots, 6$. The errors are measured in both the $L^2$-norm and the DG-norm. The observed convergence rates are consistent with Theorem~\ref{thmerror}: we observe an order-$p$ convergence rate in the DG-norm and order-$p+1$ in the $L^2$-norm. Although the latter is not covered by the present theoretical analysis, optimal rates are clearly observed in practice.
\par
Figure~\ref{fig:energy_pconv} (left) reports the errors computed in the energy functional defined as
\begin{equation}
    \enorm{\hat{c}_h(t)}^2 := \|\hat{c}_h(t)\|^2 + \int_0^t \dgnorm{\hat{c}_h(s)}^2 \ \ds + \int_0^t \|\hat{c}_h(s)\|^3_{L^3(\Omega)} \ \ds,
\label{eq:energyNorm}
\end{equation}
as in Theorem~\ref{th:a-priori-error-bound}, confirming the expected order-$p$ convergence under mesh refinement. On the right, we present the $p$-convergence results on the mesh with $N_{\mathrm{el}} = 30$, measured in the $L^2$-norm, DG-norm and energy functional. In this case, the errors exhibit exponential decay with respect to the polynomial degree $p$, indicating numerically optimal behaviour despite the lack of a corresponding theoretical estimate.
\begin{figure}[!t] 
    \centering   
    \begin{subfigure}[c]{0.48\textwidth}
        \centering
        \resizebox{\textwidth}{!}{\input{Images/h_2Dconv_L2}}
    \end{subfigure}
    \hfill
    \begin{subfigure}[c]{0.48\textwidth}
        \centering
        \resizebox{\textwidth}{!}{\input{Images/h_2Dconv_DG}}
    \end{subfigure}
    \caption{Test Case 1: computed errors in the $L^2$-norm (left) and DG-norm (right) versus the mesh-size $h$. }
    \label{fig:hconv2D}
\end{figure}
\begin{figure}[!t] 
    \centering
    \begin{subfigure}[c]{0.48\textwidth}
        \centering
        \resizebox{\textwidth}{!}{\input{Images/h_2Dconv_eps}}
    \end{subfigure}
    \hfill
    \begin{subfigure}[c]{0.48\textwidth}
        \centering
        \resizebox{\textwidth}{!}{\input{Images/p_convergence_2D}}
    \end{subfigure}
    \caption{Test Case 1: computed errors w.r.t the mesh size $h$ (left) and w.r.t. the polynomial $p$ (right).}
    \label{fig:energy_pconv}

\end{figure}
\FloatBarrier

\subsection{Test Case 2: verification in 3D}
\begin{figure}[!t] 
    \centering
    \begin{subfigure}[c]{0.48\textwidth}
        \centering
        \includegraphics[width=0.8\textwidth]{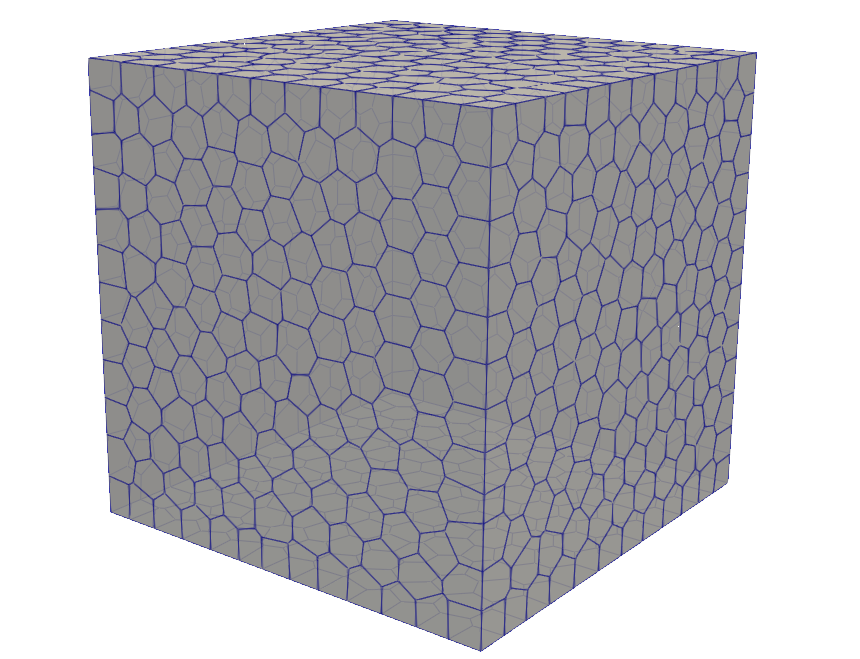}
    \end{subfigure}
    \hfill
    \begin{subfigure}[c]{0.48\textwidth}
        \centering
        \resizebox{\textwidth}{!}{\input{Images/p_convergence_3D}}
    \end{subfigure}
    \\
    \begin{subfigure}[c]{0.48\textwidth}
        \centering
        \resizebox{\textwidth}{!}{\input{Images/h_convergence_3D_L2}}
    \end{subfigure}
    \hfill
    \begin{subfigure}[c]{0.48\textwidth}
        \centering
        \resizebox{\textwidth}{!}{\input{Images/h_convergence_3D_DG}}
    \end{subfigure}

    \caption{Test Case 2: example of a Voronoi grid used (top-left) and computed errors with respect to different polynomial degrees (top-right). Computed errors and convergence rates in the $L^2$-norm (bottom-left) and DG-norm (bottom-right). }
    \label{fig:dg_and_pconv}
\end{figure}

In this section, we present some numerical results demonstrating the theoretical bounds in the 3D case. The simulations in this section have been carried out with \texttt{Vulpes} (Versatile and parallel computational Library for high-order discontinuous Polytopal Element Simulations) \cite{Vulpes}, an open-source and general-purpose C++ library developed for the numerical solution on polytopal meshes of multiphysics and multiscale differential problems. 
\par
We consider a cubic domain $\Omega = (0,1)^3$. The manufactured solution of the problem is given by:
\begin{equation*}
     \hat{c}(x,y,z,t) = (\sin(\pi x) \sin(\pi y) \sin(\pi z) + 2)e^{-t}.
\end{equation*}
\noindent We assume isotropic diffusion, which means $\mathbf{D} = \mathbf{I}$, $\alpha = 0.1$ and $\eta_0 = 10$. The forcing term $f$ and the Dirichlet boundary conditions on $\partial \Omega$ are derived according to the exact solution. We use a fixed time-step $\Delta t = 10^{-8}$ and a final time of simulation $T = 10^{-6}$ for both $h$-convergence and $p$-convergence studies. For the $h$-convergence study, we select five meshes with $h = 0.45, 0.35, 0.279, 0.226, 0.17479$ respectively generated via \texttt{Voro++} \cite{rycroft2009voro} and we repeat the simulation for polynomial degrees $p = 1, \dots, 4$. 
\par
The second row of Figure~\ref{fig:dg_and_pconv} shows the computed errors at final time $T$ versus the mesh size for each polynomial degree in the $L^2$-norm and DG-norm, respectively. We observe that the expected convergence rates are obtained, according to the results of Theorem~\ref{thmerror}.
\par
For the $p$-convergence study, we use different polynomial degrees $p = 1, ..., 6$ on the mesh with $h = 0.45$. Figure~\ref{fig:dg_and_pconv} (top-right) shows the results of this analysis. We notice an exponential decay of the errors with respect to the polynomial degree $p$, which confirms that optimal convergence is achieved. As already explained in the 2D case, there is no theoretical estimate that validates this experimental behaviour.
\FloatBarrier
\section{Numerical results: proof-of-concept in a realistic test cases and qualitative validation}
\label{sec:validation}
In this section, we first present a comparison of the FK and SFK models for a two-dimensional travelling wave solution. Next, we validate the proposed approach by simulating the propagation of $\alpha$-synuclein, a key pathogenic protein implicated in Parkinson's disease and other neurodegenerative diseases, demonstrating that the proposed scheme is stable, high-order accurate, and robust in a biologically relevant computational setting.
\subsection{Test case 3: comparison of the FK and SFK models for a 2D travelling wave solution}
\label{sec:test3}
We consider a solution of the form $c(x, y, t) = \psi(x - vt) = \psi(\xi)$. By substituting the expression into problem \eqref{eq:eqFK} with homogeneous forcing term $f=0$ and free boundary conditions, we obtain the following system of equations:
\begin{equation}
\label{eq:wave_problem}
\begin{aligned}
\chi'(\xi) & = -\dfrac{v}{d_{ext}} \chi(\xi) + \dfrac{1}{d_{ext}} \psi(\xi)(1 - \psi(\xi)), && \quad \xi \in (0, T], \\[3mm]
\psi'(\xi) & = \chi(\xi), && \quad \xi \in (0, T],
\end{aligned}
\end{equation}
assuming an isotropic diffusion tensor $\mathbf{D} = d_{ext} \mathbf{I}$. For this test case, we use the parameters $d_{ext} = 10^{-3}$, $\alpha = 1$, and $\eta_0 = 10$. The wave speed is set to $v = 0.1$, with initial conditions $\psi(0) = 1$ and $\chi(0) = -10^{-2}$. The chosen parameters are taken from \cite{corti_discontinuous_2023} to allow a direct comparison between the proposed model and the reference. We consider two final simulation times, $T = 5$ and $T = 10$ and $\Omega =(0,5)\times(0,1)$. We compute a reference solution of problem \eqref{eq:wave_problem} numerically using MATLAB’s \texttt{ode45} solver, with a tolerance set to $10^{-12}$.  
\par
First, we analyse the effect of different mesh sizes ($N_\mathrm{el} = 30, 100, 300$), with the corresponding number of degrees of freedom (DOFs) keeping the time step fixed at $\Delta t = 0.01$, and compute the $L^2$-error at the final simulation time for both $p=2$ and $p=3$. The results are reported in Table~\ref{tab:combined_errors} for both the FK and SFK methods. We observe that the stabilised version significantly reduces the errors, particularly for coarse meshes with a lower number of degrees of freedom. Remarkably, our model is able to simulate the travelling wave even for $p=2$. Figure~\ref{fig:testcase2_waveplot} illustrates the final-time solutions for both methods, providing a visual counterpart to the numerical data in Table~\ref{tab:combined_errors}. For $T=10$, the solutions produced by the original model are completely unreliable, whereas our formulation provides reasonably accurate results in all cases. The numerical scheme proposed in \cite{corti_discontinuous_2023} lacks robustness when the solution approaches negative values, due to the instability of the equilibrium point at $c = 0$. 
\par
Figure~\ref{fig:positivity_vs_nonpositivity} illustrates an example where the error-correction mechanism is active. We report the solution along the line $y=0.5$ for $x \in [0,5]$, with $p=2$ and $N_\mathrm{el}=30$, using both schemes. For the stabilised scheme, when the solution reaches negative values, the model corrects the oscillations after a few iterations. In contrast, the scheme without positivity preservation amplifies the oscillations around $c=0$, leading to the results shown in Figure~\ref{fig:testcase2_waveplot}. Additionally, with the classical formulation, we observe oscillations also around 
$c=1$, which are typical in applications with fast variations (high gradients), 
in addition to those occurring at $c=0$. As shown, our method is able to damp 
the oscillations at $c=1$ as well, leading to overall improved results.
\begin{figure}[ht]
    \centering
    \includegraphics[width=\linewidth]{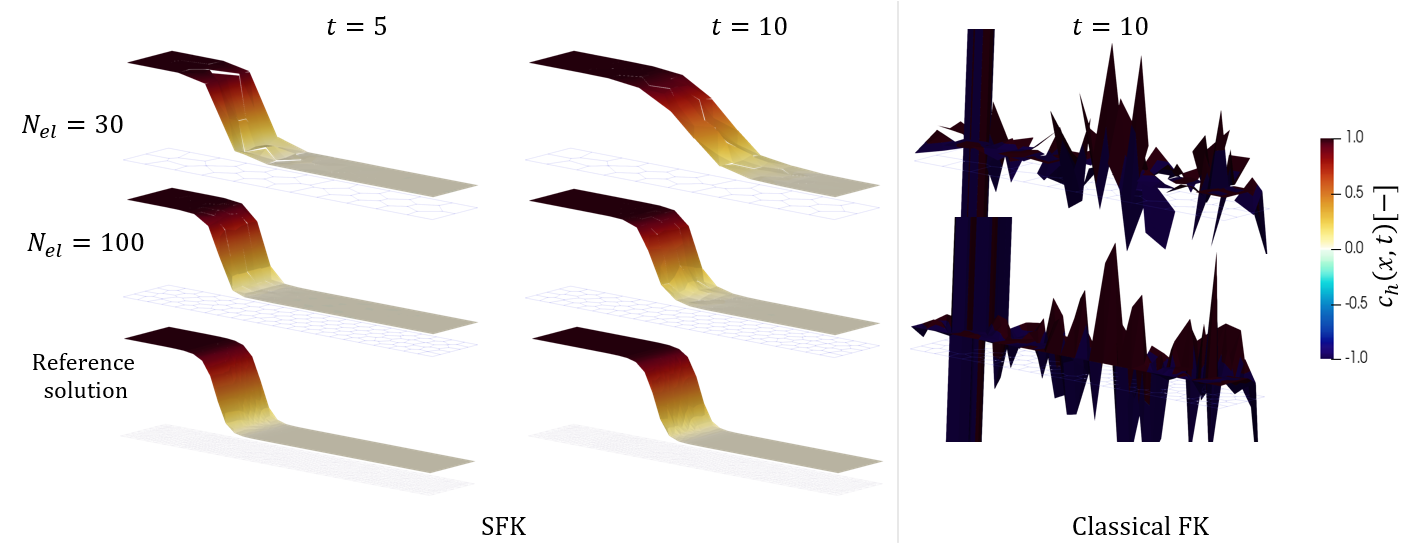}
    \caption{Test case 3: solutions with different mesh refinements using both the classical and stabilised FK formulations with $p=2$.}
    \label{fig:testcase2_waveplot}
\end{figure}

\begin{table}[!t]
\centering
\caption{Test case 3: computed $L^2$ errors at final time with different mesh refinements ($\Delta t = 0.01$): $p = 2$ (top) and $p = 3$ (bottom) for both the classical and stabilised FK formulations.}
\label{tab:combined_errors}

\small 
\setlength{\tabcolsep}{8pt} 
\renewcommand{\arraystretch}{1.3} 

\begin{tabular}{l|l|cc|cccc}
\hline
& & \multicolumn{2}{c}{\textbf{SFK}} & \multicolumn{4}{c}{\textbf{FK} \cite{corti_discontinuous_2023}} \\
\cline{3-8}
& & \multicolumn{2}{c|}{Semi-Implicit} & \multicolumn{2}{c}{Semi-Implicit} & \multicolumn{2}{c}{Implicit} \\
\cline{3-8}
Refinement & DOFs & $T = 5$ & $T = 10$ & $T = 5$ & $T = 10$ & $T = 5$ & $T = 10$ \\
\hline
\hline
\multicolumn{8}{c}{$p = 2 \quad \Delta t = 0.01$} \\
\hline
$N_{el} = 30$  & 180  & $7.00 \times 10^{-2}$ & $4.74 \times 10^{-1}$ & $6.33 \times 10^2$  & $1.03 \times 10^4$ & $1.63 \times 10^9$   & $5.36 \times 10^6$ \\
$N_{el} = 100$ & 600  & $5.55 \times 10^{-3}$ & $8.72 \times 10^{-2}$  & $1.45 \times 10^2$  & $1.12 \times 10^4$ & $8.24 \times 10^{-1}$ & $2.18 \times 10^7$ \\
$N_{el} = 300$ & 1800 & $3.00 \times 10^{-3}$ & $3.70 \times 10^{-3}$  & $1.98 \times 10^1$  & $6.02 \times 10^4$ & $6.97 \times 10^{-2}$ & $1.28 \times 10^8$ \\
\hline
\hline
\multicolumn{8}{c}{$p = 3 \quad \Delta t = 0.01$} \\
\hline
$N_{el} = 30$  & 300  & $1.41 \times 10^{-2}$ & $2.81 \times 10^{-1}$ & $9.27 \times 10^{-1}$ & $6.75 \times 10^4$  & $1.56 \times 10^{-1}$ & $7.34 \times 10^7$ \\
$N_{el} = 100$ & 1000 & $2.70 \times 10^{-3}$ & $6.20 \times 10^{-3}$  & $5.50 \times 10^{-2}$ & $7.20 \times 10^{-1}$ & $8.12 \times 10^{-3}$ & $1.78 \times 10^{-1}$ \\
$N_{el} = 300$ & 3000 & $2.70 \times 10^{-3}$ & $5.90 \times 10^{-3}$  & $6.35 \times 10^{-4}$ & $7.80 \times 10^{-3}$ & $7.71 \times 10^{-4}$ & $2.12 \times 10^{-3}$ \\
\hline
\end{tabular}
\end{table}
In Figure~\ref{fig:testcase2_err_vs_T_vs_DOFs}, the left panel reports the error in the energy functional defined in \eqref{eq:energyNorm} for three polynomial orders as a function of time. The behaviour for \(p=2\) is qualitatively consistent with the other cases, further supporting the improved long-time stability predicted by Theorem~\ref{thm:stability_estimate}. In particular, no exponential growth is observed for low-order approximations, in contrast with the formulation in \cite{corti_discontinuous_2023}. The right panel shows the error at the final time \(T=5\) as a function of the number of DOFs, comparing \(h\)-refinement with fixed polynomial order \(p=2\) and \(p\)-refinement on a fixed mesh with \(N_\mathrm{el}=30\). For a fixed number of DOFs, higher polynomial orders consistently yield lower errors than mesh refinement, indicating that \(p\)-refinement is more effective than \(h\)-refinement in this setting.

\begin{figure}[!t] 
  \centering

  \begin{subfigure}{0.245\textwidth} 
    \centering
    \begin{tikzpicture}
      \begin{axis}[
        width=5.0cm,
        height=4.5cm, 
        font=\footnotesize, 
        xlabel={$x$ [-]},
        ylabel={$c(x)$},
        grid=both,
        legend style={at={(0.98,0.98)},anchor=north east},
        ymin=-0.1,
        ymax=1.1,
        title={$\bar t=0.53$ (SFK model)},
        xmin = 0,
        xmax=5
      ]
        \addplot[
          color=green!70!black,
          ultra thick
        ] table[
          col sep=comma,
          x=x,
          y=uh
        ] {Dati/Snapshot53_positivity_preserving.csv};
        \addlegendentry{$\hat{c}_h(x)$}
        \draw[red, ultra thin] (1.1,-0.07) rectangle (2.25,0.17);
      \end{axis}
    \end{tikzpicture}
  \end{subfigure}
  \hfill
  \begin{subfigure}{0.245\textwidth} 
    \centering
    \begin{tikzpicture}
      \begin{axis}[
        width=5.0cm,
        height=4.5cm, 
        font=\footnotesize, 
        xlabel={$x$ [-]},
        grid=both,
        ymin=-0.1,
        ymax=1.1,
        xmin = 0,
        xmax=5,
        title={$\bar t=0.63$ (SFK model)},
        legend style={at={(0.98,0.98)},anchor=north east},
      ]
        \addplot[
          color=green!70!black,
          ultra thick,
        ] table[
          col sep=comma,
          x=x,
          y=uh
        ] {Dati/Snapshot63_positivity_preserving.csv};
        \addlegendentry{$\hat{c}_h(x)$}
        \draw[red, ultra thin] (1.1,-0.07) rectangle (2.25,0.17);
      \end{axis}
    \end{tikzpicture}
  \end{subfigure}
  \begin{subfigure}{0.245\textwidth} 
    \centering
    \begin{tikzpicture}
      \begin{axis}[
        width=5.0cm,
        height=4.5cm, 
        font=\footnotesize, 
        xlabel={$x$ [-]},
        grid=both,
        legend style={at={(0.98,0.98)},anchor=north east},
        ymin=-0.15,
        ymax=1.1,
        title={$\bar t=0.01$ (classical FK model)},
        xmin = 0,
        xmax=5
      ]
        \addplot[
          color=magenta,
          ultra thick
        ] table[
          col sep=comma,
          x=x,
          y=uh
        ] {Dati/Snapshot_FK_0.csv};
        \addlegendentry{$c_h(x)$}
        \draw[red, ultra thin] (1.1,-0.13) rectangle (2.25,0.17);
      \end{axis}
    \end{tikzpicture}
  \end{subfigure}
  \hfill
  \begin{subfigure}{0.245\textwidth} 
    \centering
    \begin{tikzpicture}
      \begin{axis}[
        width=5.0cm,
        height=4.5cm, 
        font=\footnotesize, 
        xlabel={$x$ [-]},
        grid=both,
        ymin=-0.15,
        ymax=1.1,
        xmin = 0,
        xmax=5,
        title={$\bar{t} =0.11$ (classical FK model)},
        legend style={at={(0.98,0.98)},anchor=north east},
      ]
        \addplot[
          color=magenta,
          ultra thick,
        ] table[
          col sep=comma,
          x=x,
          y=uh
        ] {Dati/Snapshot_FK_10.csv};
        \addlegendentry{$c_h(x)$}
        \draw[red, ultra thin] (1.1,-0.13) rectangle (2.25,0.17);
      \end{axis}
    \end{tikzpicture}
  \end{subfigure}

  \vspace{0.1cm}
\caption{Test case 3: snapshots of the approximated solution along the line $y=0.5$ at different timestamps for the SFK model and the classical FK model \cite{corti_discontinuous_2023}.}
  \label{fig:positivity_vs_nonpositivity}
\end{figure}

\begin{figure}[t]
\centering

\begin{subfigure}[c]{0.46\textwidth} 
\centering
\begin{tikzpicture}
\begin{semilogyaxis}[
    width=\textwidth,
    height=4.5cm,
    font=\footnotesize, 
    xlabel={$T$ [-]},
    ylabel={$\|c(t) - c_h(t)\|_{\varepsilon} $},
    grid=both,
    grid style={line width=.1pt, draw=gray!30},
    xtick={1,...,8},
    xmin=0,
    xmax=5,
    thick,
    cycle list={
        {green!60!black, solid, line width=1.5pt},
        {orange!80!black, solid, line width=1.5pt},
        {cyan!80!black, solid, line width=1.5pt}
    },
    legend style={at={(0.02,0.98)},anchor=north west}
]

\addplot table [x=T, y=err_Energy, col sep=comma] {Dati/T_vs_Energy_p_2.csv}; \addlegendentry{$p=2$}
\addplot table [x=T, y=err_Energy, col sep=comma] {Dati/T_vs_Energy_p_3.csv}; \addlegendentry{$p=3$}
\addplot table [x=T, y=err_Energy, col sep=comma] {Dati/T_vs_Energy_p_4.csv}; \addlegendentry{$p=4$}

\end{semilogyaxis}
\end{tikzpicture}
\end{subfigure}
\hfill 
\begin{subfigure}[c]{0.46\textwidth} 
\centering
\begin{tikzpicture}
\begin{loglogaxis}[
    width=\textwidth,
    height=4.5cm,
    font=\footnotesize, 
    xlabel={DOFs },
    ylabel={$\|c(t) - c_h(t)\|_{\varepsilon} $},
    grid=both,
    grid style={line width=.1pt, draw=gray!30},
    thick,
    xticklabel={\pgfmathparse{exp(\tick)}\pgfmathprintnumber{\pgfmathresult}},
x tick label style={
/pgf/number format/.cd, fixed, fixed zerofill,
precision=0},
    ymin=0.001,
    cycle list={
        {green!60!black, solid, line width=1.5pt},
        {orange!80!black, solid, line width=1.5pt},
    },
    legend style={at={(0.98,0.98)},anchor=north east}
]

\addplot table [x=DOFs, y=EnergyError, col sep=comma] {Dati/p-ref.csv}; \addlegendentry{$p$-refinement}
\addplot table [x=DOFs, y=EnergyError, col sep=comma] {Dati/h-ref_p_2.csv}; \addlegendentry{$h$-refinement ($p=2$)}

\end{loglogaxis}
\end{tikzpicture}
\end{subfigure}

\caption{Test case 3: computed errors with respect to the simulation time $T$ for $N_{el} = 30$ (left) and with respect to the number of degrees of freedom (DOFs) considering $p$-refinement and $h$-refinement (right).}
\label{fig:testcase2_err_vs_T_vs_DOFs}
\end{figure}
\FloatBarrier

\subsection{Test Case 4: Spreading of \texorpdfstring{$\alpha$}{alpha}-synuclein in a 2D brain section}
\begin{figure}[!t]
    \centering
    \includegraphics[width=0.8\textwidth]{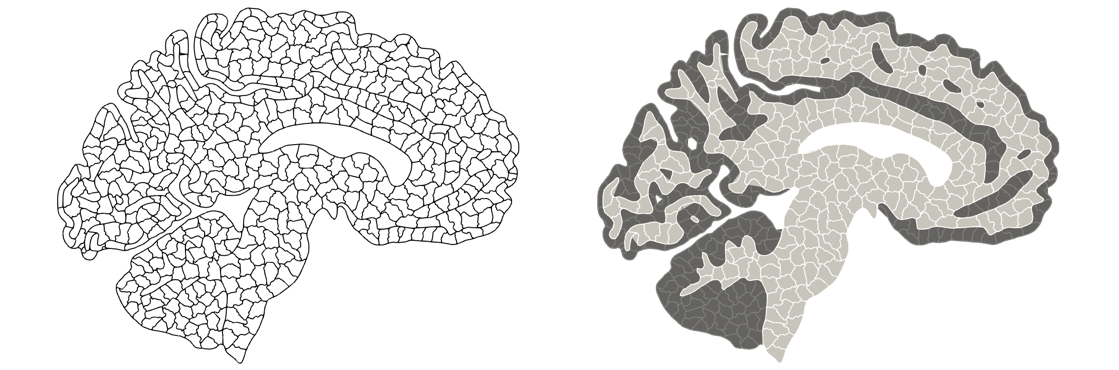}
    \caption{Sagittal section of the brain domain showing the agglomerated polytopal mesh with $534$ elements (left) and the corresponding computational domain partitioned into grey and white matter (right).}
    \label{fig:brain_meshes}
\end{figure}
In this section, we simulate the spreading of $\alpha$-synuclein protein on a polytopal agglomerated grid representing a brain section. We use an agglomerated mesh made of 534 elements that represents a sagittal brain slice as shown in Figure~\ref{fig:brain_meshes}. The solution is computed by using a polynomial order of discretisation $p=4$ and a time-step $\Delta t = 10^{-2} $ years. Concerning the parameters of the model, the diffusion tensor $\mathbf{D} = \mathbf{D}(\boldsymbol{x})$ describes the spreading of misfolded proteins inside the domain and is obtained as the combination of an axonal diffusion of magnitude $d_{axn} \geq 0$ and an extracellular diffusion effect of magnitude $d_{ext} \geq 0$, such that:
\begin{equation}
    \mathbf{D}(\boldsymbol{x}) = d_{ext} \mathbf{I} + d_{axn} \boldsymbol{a}(\boldsymbol{x}) \otimes \boldsymbol{a}(\boldsymbol{x}),
\end{equation}
where $\boldsymbol{a}(\boldsymbol{x})$ is the axonal fibres orientation at point $\boldsymbol{x} \in \Omega$. The axonal direction is the main orientation of the connections between the neurons (axons), which can be derived from Diffusion Weighted Imaging (DWI) \cite{weickenmeier_physics_2019}.

We select an extracellular diffusion magnitude $d_{ext} = 8  \ \text{mm}^2 / \text{year}$ and an axonal diffusion magnitude $d_{axn} = 80 \ \text{mm}^2 / \text{year}$ \cite{schafer_interplay_2019} (see \cite[Fig. 6b]{antonietti_discontinuous_2024} for details axonal fibres). Moreover, we set the reaction parameter $\alpha = 0.9 / \text{year}$ and the penalty coefficient $\eta_0 = 10$. We assume $f = 0$ and impose homogeneous Neumann boundary conditions on $\partial \Omega$. As an initial condition, we consider a Gaussian concentration of proteins initially located in the dorsal motor nucleus \cite{braak_staging_2003}. 
\begin{figure}[!t]
    \centering
    \includegraphics[width=\textwidth]{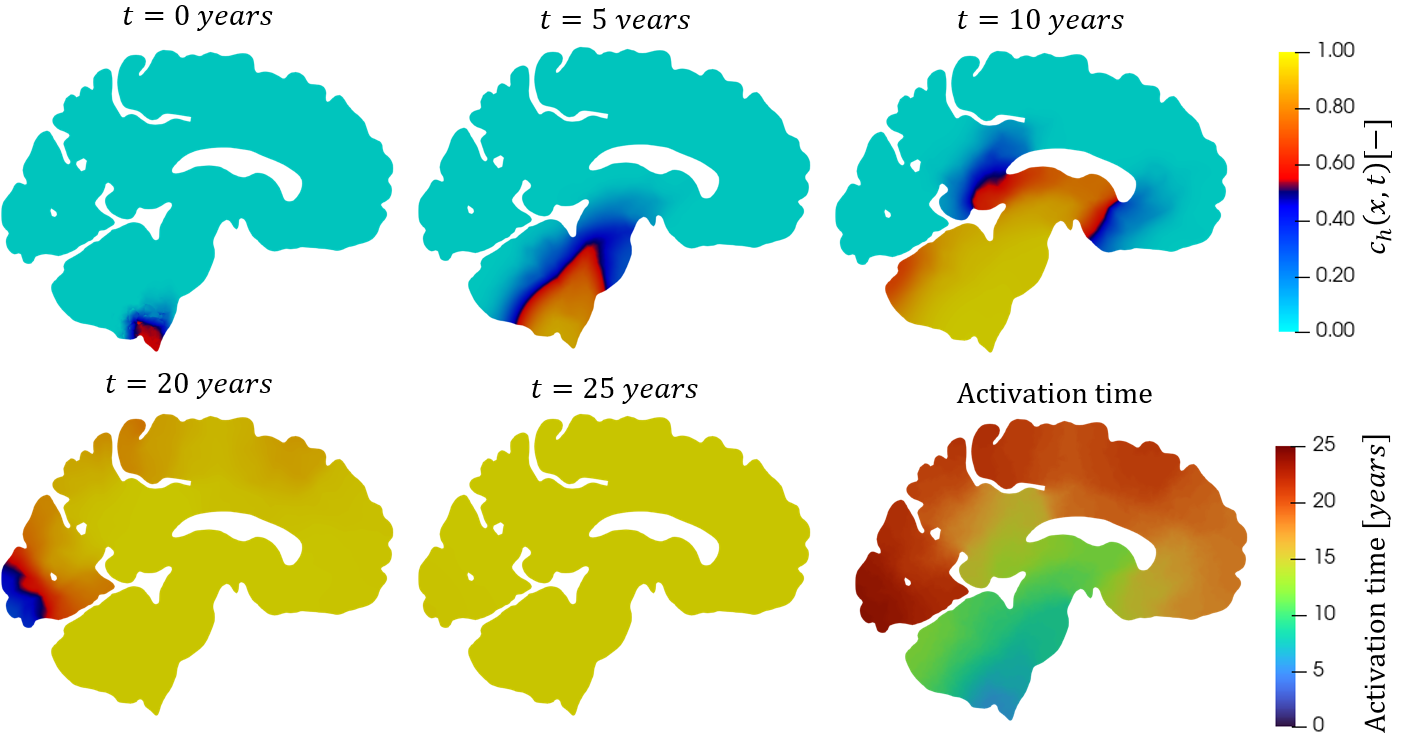}

    \caption{Test Case 4: patterns of computed $\alpha$-synuclein concentration $\hat{c}_h$ through different years of development of the pathology and corresponding activation time computed with the SFK model and polynomial order $p=4$.}
    \label{fig:mfk_patterns_and_activation}
\end{figure}
\par
In Figure~\ref{fig:mfk_patterns_and_activation}, we report the initial condition and the solution at different time instants up to the final time $T = 25 $ years. We observe that the diffusion of the misfolded proteins is coherent with the medical literature \cite{braak_staging_2003, goedert_alzheimer_2015}. We highlight that the time development of the pathology is of the order of 25 years, consistent with the medical literature. Furthermore, we compute the activation time of the pathology as:
\begin{equation}
    \hat{t}(\boldsymbol{x}) = \underset{t\in(0,T]}{\mathrm{argmin}} \{\hat{c}_h(\boldsymbol{x},t)\geq c_\mathrm{crit}\}
\label{activationtime}
\end{equation}
where we set the critical concentration of misfolded proteins $c_\mathrm{crit} =0.95$. The choice of this value is justified by the fact that higher values of protein concentration compromise the electric signal transport. The activation time gives us the idea of the time after which a neuron is damaged due to the progression of the disease. 
\subsection{Test Case 5: Comparison with the FK model}
We repeat the previous test case using the fully discrete FK formulation introduced in \cite{corti_discontinuous_2023}. Keeping the same mesh and parameter setup, we simulate the spreading of $\alpha$-synuclein in a 2D brain section. Figure~\ref{fig:fk_pattern_activation} compares the numerical solutions of the FK and SFK models, showing excellent consistency in the absence of negative values. To further assess the formulation, we compare the activation times computed by both models. The resulting activation profiles are identical due to the exact agreement of the solutions at each time point, further validating the consistency between the FK and SFK frameworks. Moreover, increasing the polynomial order and refining the mesh effectively suppresses numerical oscillations, confirming that both models converge toward the same propagation pattern under discretisation refinement.
\begin{figure}
    \centering\includegraphics[width=\textwidth]{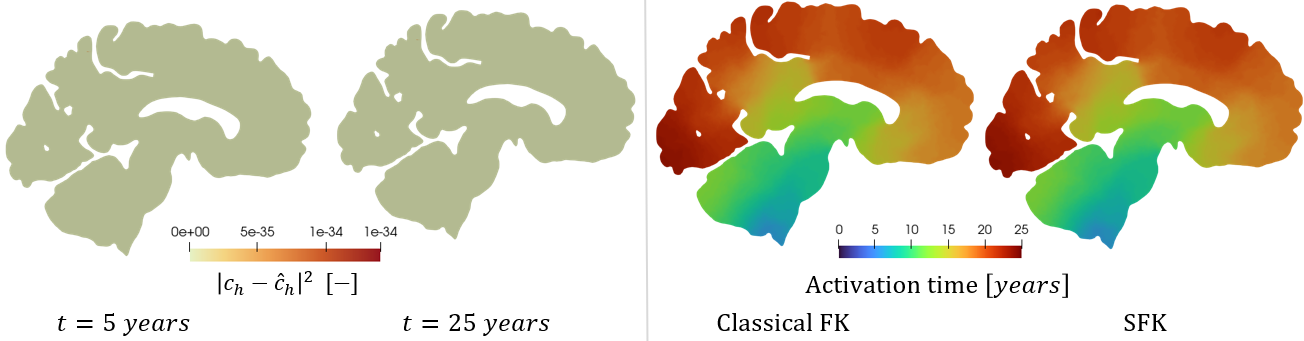}
    \caption{Test Case 5: comparison of the FK and SFK ($p=4$) models. Left panels depict the spatial difference in $\alpha$-synuclein concentration over time. The right panel shows the corresponding activation times for both models.}
    \label{fig:fk_pattern_activation}
\end{figure}
\begin{figure}
    \centering\includegraphics[width=\linewidth]{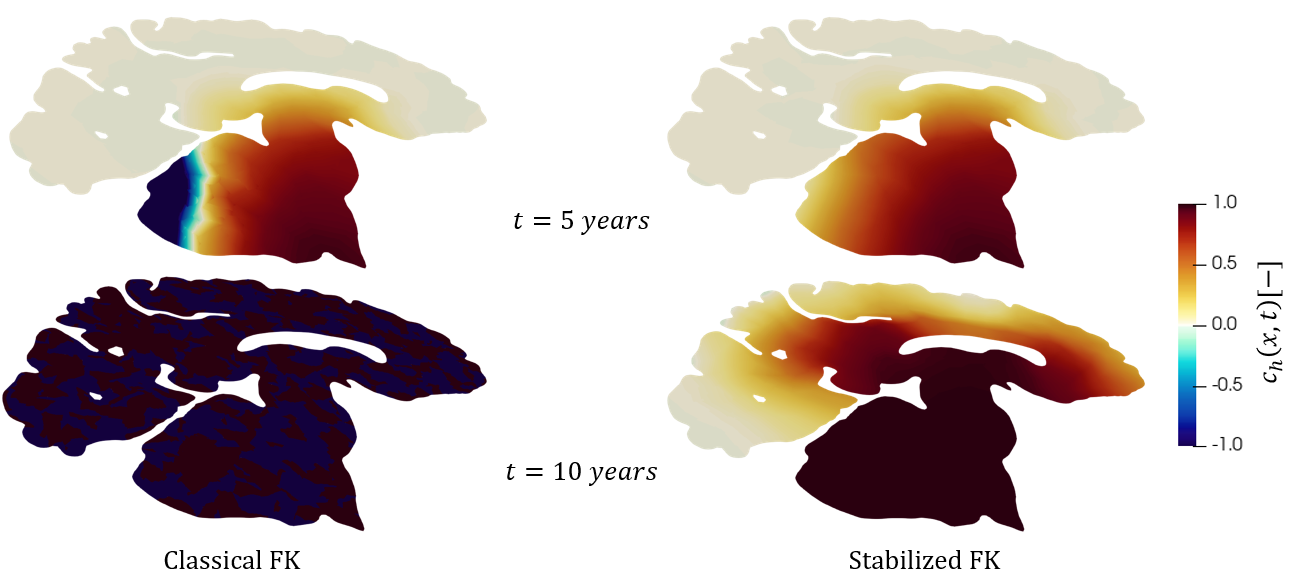}
    \caption{Test Case 5: comparison of computed $\alpha$-synuclein concentration across different years of pathology development for the FK and SFK models with $p=1$.}
    \label{fig:fk_vs_mfk_p1_brain}
\end{figure}

To assess the robustness of the SFK model under lower-order discretisations, we repeat the simulation using $p = 1$ for both formulations. As shown in Figure~\ref{fig:fk_vs_mfk_p1_brain}, the SFK model accurately captures the expected propagation dynamics. In contrast, the FK model suffers from numerical instability, producing unphysical negative values near the outer brain boundary that ultimately trigger divergence. This comparison underscores the ability of the SFK formulation to deliver reliable solutions even at low polynomial orders, significantly reducing computational overhead.
\subsection{Test Case 6: Spreading of \texorpdfstring{$\alpha$}{alpha}-synuclein with grey-white matter distinction}
In this test case, we simulate the spreading of $\alpha$-synuclein protein in a 2D brain section taking into account the distinction between grey and white matter (see Figure~\ref{fig:brain_meshes}). We highlight the difference because grey matter can be approximately assumed isotropic, while white matter is mainly made of axons, whose fibres represent the principal direction of propagation of the electric signal. We select a unique extracellular diffusion magnitude $d_{ext} = 8 \ \text{mm}^2 / \text{year}$, an axonal diffusion magnitude $d_{axn} = 0  \ \text{mm}^2 / \text{year}$ and a reaction parameter $\alpha = 0.45 / \text{year}$ for grey matter regions, and $d_{axn} = 80 \ \text{mm}^2 / \text{year}$ and $\alpha = 0.9 / \text{year}$ for white matter regions. The solution is computed using a time-step $\Delta t = 10^{-2}$ years and polynomial order $p=4$ for both formulations, on the agglomerated mesh introduced in Test Case 3. Moreover, we choose $\eta_0 = 10$, $f = 0$, and we assume homogeneous Neumann boundary conditions on $\partial \Omega$. As an initial condition, we consider a Gaussian distribution of proteins located in the dorsal motor nucleus \cite{braak_staging_2003}.
\par
Figure~\ref{fig:fk_vs_mfk_gb} displays the computed solution at different time instants ($t = 10$ and $T = 25$ years) alongside the resulting activation times for both formulations. We observe that the progression of the disease remains coherent with the medical literature \cite{braak_staging_2003}. The activation time of the pathology, computed using \eqref{activationtime}, is not complete within the considered time frame, as expected from the concentration patterns. Moreover, the SFK model achieves the same accuracy and captures identical patterns as the FK model at the same polynomial order ($p=4$), confirming the consistency between the two formulations also in this more anatomically detailed setting, where grey and white matter regions exhibit markedly different diffusion and reaction properties.
\begin{figure}[t]
    \centering
    \includegraphics[width=\textwidth]{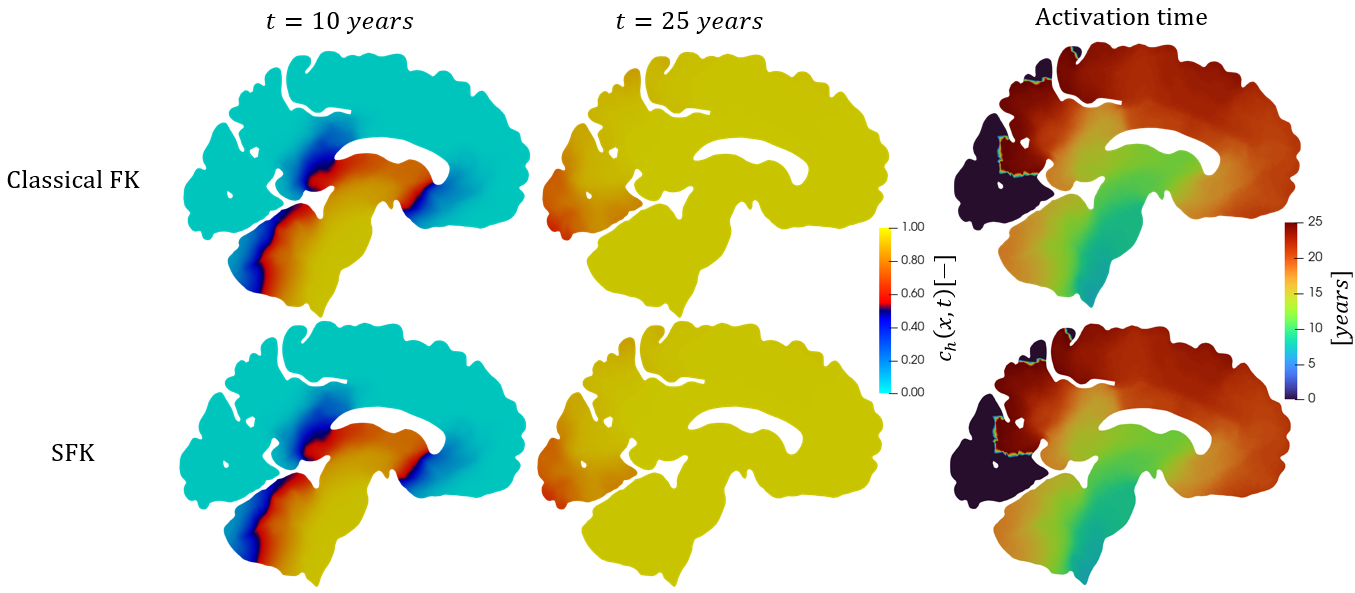}
    \caption{Test Case 6: patterns of computed $\alpha$-synuclein concentration at $t=10$ and $T=25$ years, and corresponding pathology activation times, distinguishing between grey and white matter. Results are obtained using the FK model ($p=4$) and the SFK model ($p=4$).}
    \label{fig:fk_vs_mfk_gb}
\end{figure}
\section{Conclusions}
\label{sec:end}
In this work, we proposed a polytopal discontinuous Galerkin discretisation of a stabilised Fisher--Kolmogorov model obtained by introducing an absolute-value stabilisation in the non-linear reaction term. For its numerical approximation, we propose a discontinuous Galerkin spatial discretisation on general polygonal and polyhedral meshes with the Crank--Nicolson time integration scheme. The resulting model and numerical scheme exhibit several key advantages. On the one hand, for non-negative continuous solutions, the formulation is strongly consistent with the original model, which is particularly relevant for biological applications. On the other hand, it ensures significantly improved stability properties. 
We prove stability and convergence error estimates for the proposed method on a general class of polytopal meshes. 
We provided numerical validation of the theoretical results in both two and three dimensions. We also simulated the diffusion of $\alpha$-synuclein in a realistic 2D brain section, validating the outcomes by comparing activation times in selected brain regions with findings reported in the medical literature. Furthermore, a comparison with the non-stabilised formulation shows that the proposed method delivers reliable results even at lower computational cost, while maintaining high-order accuracy and computational efficiency.
\par
Possible future developments include validation on realistic 3D brain geometries, enabling applications to patient-specific reconstructions. Another promising direction is the development of a space--time DG formulation \cite{mazzieri_space-time_2020,antonietti_discontinuous_2023} to achieve higher-order convergence in time. Finally, the approach could be extended to other biological population dynamics models affected by similar instabilities near equilibrium states, which often compromise the robustness of standard numerical solvers.


\section*{Acknowledgements}
OASIS-3 provided the brain MRI images: Longitudinal Multimodal Neuroimaging: Principal Investigators: T. Benzinger, D. Marcus, J. Morris; NIH P30 AG066444, P50 AG00561, P30 NS09857781, P01 AG026276, P01 AG003991, R01 AG043434, UL1 TR000448, R01 EB009352. AV-45 doses were provided by Avid Radiopharmaceuticals, a wholly-owned subsidiary of Eli Lilly.

\bibliographystyle{ieeetr}
\bibliography{sample.bib}
\end{document}

%% file: Images/h_2Dconv_L2.tex
%
%
\definecolor{mycolor1}{rgb}{0.00000,0.44700,0.74100}%
\definecolor{mycolor2}{rgb}{0.85000,0.32500,0.09800}%
\definecolor{mycolor3}{rgb}{0.92900,0.69400,0.12500}%
\definecolor{mycolor4}{rgb}{0.49400,0.18400,0.55600}%
\definecolor{mycolor5}{rgb}{0.46600,0.67400,0.18800}%
\definecolor{mycolor6}{rgb}{0.30100,0.74500,0.93300}%
\begin{tikzpicture}

\begin{axis}[%
width=4.0in,
height=2.5in,
scale only axis,
xmode=log,
xmin=0.0594220932193747,
xmax=0.330150595398318,
xminorticks=false,
xlabel style={font=\color{white!15!black}},
xlabel={$h[-]$},
xticklabel={\pgfmathparse{exp(\tick)}\pgfmathprintnumber{\pgfmathresult}},
x tick label style={
/pgf/number format/.cd, fixed, fixed zerofill,
precision=2},
ymode=log,
ymin=1e-12,
ymax=0.01,
yminorticks=false,
ylabel style={font=\color{white!15!black}},
ylabel={$\|\hat{c}(T) - \hat{c}_h(T)\|$},
axis background/.style={fill=white},
xmajorgrids,
ymajorgrids,
legend style={legend cell align=left, align=left, draw=white!15!black, at={(0.06,0.96)}, anchor=north west}
]

\addplot [color=mycolor1, line width=2.0pt, mark=*]
  table[row sep=crcr]{%
0.330150595398318	0.0017032053351897\\
0.183622722177214	0.000532715700730886\\
0.107072661625942	0.000205101548275546\\
0.0594220932193747	7.29478311671391e-05\\
};
\addlegendentry{$p=1$}

\addplot [color=mycolor2, line width=2.0pt, mark=*]
  table[row sep=crcr]{%
0.330150595398318	0.00039298769862688\\
0.183622722177214	7.44306671304848e-05\\
0.107072661625942	1.30773456250953e-05\\
0.0594220932193747	2.2093005662909e-06\\
};
\addlegendentry{$p=2$}

\addplot [color=mycolor3, line width=2.0pt, mark=*]
  table[row sep=crcr]{%
0.330150595398318	4.73215479264294e-05\\
0.183622722177214	3.10360196402689e-06\\
0.107072661625942	3.15840092568419e-07\\
0.0594220932193747	2.57723474023662e-08\\
};
\addlegendentry{$p=3$}

\addplot [color=mycolor4, line width=2.0pt, mark=*]
  table[row sep=crcr]{%
0.330150595398318	2.53050954562652e-06\\
0.183622722177214	1.35825526286931e-07\\
0.107072661625942	7.81371143870854e-09\\
0.0594220932193747	3.75042256236892e-10\\
};
\addlegendentry{$p=4$}

\addplot [color=mycolor5, line width=2.0pt, mark=*]
  table[row sep=crcr]{%
0.330150595398318	1.07611260356472e-07\\
0.183622722177214	2.46290421711425e-09\\
0.107072661625942	8.18538790095752e-11\\
0.0594220932193747	1.08797635705023e-11\\
};
\addlegendentry{$p=5$}

\addplot [color=mycolor6, line width=2.0pt, mark=*]
  table[row sep=crcr]{%
0.330150595398318	3.9513129501564e-09\\
0.183622722177214	5.6176023210469e-11\\
0.107072661625942	1.07376273178609e-11\\
0.0594220932193747	1.06870511798269e-11\\
};
\addlegendentry{$p=6$}

\draw[black, thick]
(axis cs:0.22, 5.5e-4) -- 
(axis cs:0.25, 5.5e-4) -- 
(axis cs:0.25, 7.875e-4) -- cycle; 
\node[right, xshift=2pt] at (axis cs:0.25, 6.9e-4) {2};

\draw[black, thick]
(axis cs:0.22, 9.0e-5) -- 
(axis cs:0.25, 9.0e-5) -- 
(axis cs:0.25, 14.25e-5) -- cycle; 
\node[right, xshift=2pt] at (axis cs:0.25, 9.5e-5) {3};

\draw[black, thick]
(axis cs:0.22, 4.0e-6) -- 
(axis cs:0.25, 4.0e-6) -- 
(axis cs:0.25, 8.906e-6) -- cycle; 
\node[right, xshift=2pt] at (axis cs:0.25, 6.0e-6) {4};

\draw[black, thick]
(axis cs:0.22, 2.0e-7) -- 
(axis cs:0.25, 2.0e-7) -- 
(axis cs:0.25, 4.441e-7) -- cycle; 
\node[right, xshift=2pt] at (axis cs:0.25, 3.0e-7) {5};

\draw[black, thick]
(axis cs:0.22, 5.0e-9) -- 
(axis cs:0.25, 5.0e-9) -- 
(axis cs:0.25, 13.441e-9) -- cycle; 
\node[right, xshift=2pt] at (axis cs:0.25, 8.0e-9) {6};

\draw[black, thick]
(axis cs:0.22, 1.5e-10) -- 
(axis cs:0.25, 1.5e-10) -- 
(axis cs:0.25, 4.2441e-10) -- cycle; 
\node[right, xshift=2pt] at (axis cs:0.25, 2.5e-10) {7};

\end{axis}
\end{tikzpicture}%

%% file: Images/h_2Dconv_DG.tex
%
%
\definecolor{mycolor1}{rgb}{0.00000,0.44700,0.74100}%
\definecolor{mycolor2}{rgb}{0.85000,0.32500,0.09800}%
\definecolor{mycolor3}{rgb}{0.92900,0.69400,0.12500}%
\definecolor{mycolor4}{rgb}{0.49400,0.18400,0.55600}%
\definecolor{mycolor5}{rgb}{0.46600,0.67400,0.18800}%
\definecolor{mycolor6}{rgb}{0.30100,0.74500,0.93300}%
\begin{tikzpicture}

\begin{axis}[%
width=4.0in,
height=2.5in,
scale only axis,
xmode=log,
xmin=0.0594220932193747,
xmax=0.330150595398318,
xminorticks=false,
xlabel style={font=\color{white!15!black}},
xlabel={$h[-]$},
ymode=log,
ymin=1e-12,
ymax=1,
yminorticks=false,
xticklabel={\pgfmathparse{exp(\tick)}\pgfmathprintnumber{\pgfmathresult}},
x tick label style={
/pgf/number format/.cd, fixed, fixed zerofill,
precision=2},
ylabel style={font=\color{white!15!black}},
ylabel={$\|\hat{c}(T) - \hat{c}_h(T)\|_\mathrm{DG}$},
axis background/.style={fill=white},
xmajorgrids,
ymajorgrids,
legend style={legend cell align=left, align=left, draw=white!15!black, at={(0.06,0.96)}, anchor=north west}
]
\addplot [color=mycolor1, line width=2.0pt, mark=*]
  table[row sep=crcr]{%
0.330150595398318	0.0954642030680627\\
0.183622722177214	0.0388217526158685\\
0.107072661625942	0.0166633748454037\\
0.0594220932193747	0.00774749805880071\\
};
\addlegendentry{$p=1$}

\addplot [color=mycolor2, line width=2.0pt, mark=*]
  table[row sep=crcr]{%
0.330150595398318	0.0429392425668809\\
0.183622722177214	0.0133946134005278\\
0.107072661625942	0.00440471829786157\\
0.0594220932193747	0.00134547426743042\\
};
\addlegendentry{$p=2$}

\addplot [color=mycolor3, line width=2.0pt, mark=*]
  table[row sep=crcr]{%
0.330150595398318	0.00567197775954265\\
0.183622722177214	0.000736087835384694\\
0.107072661625942	0.000135399957239786\\
0.0594220932193747	2.10779820988678e-05\\
};
\addlegendentry{$p=3$}

\addplot [color=mycolor4, line width=2.0pt, mark=*]
  table[row sep=crcr]{%
0.330150595398318	0.000504035704428251\\
0.183622722177214	4.14235781585214e-05\\
0.107072661625942	4.33725725062076e-06\\
0.0594220932193747	3.57639274311806e-07\\
};
\addlegendentry{$p=4$}

\addplot [color=mycolor5, line width=2.0pt, mark=*]
  table[row sep=crcr]{%
0.330150595398318	2.798315815265e-05\\
0.183622722177214	1.02645308633114e-06\\
0.107072661625942	6.21886617994587e-08\\
0.0594220932193747	2.60468945941712e-09\\
};
\addlegendentry{$p=5$}

\addplot [color=mycolor6, line width=2.0pt, mark=*]
  table[row sep=crcr]{%
0.330150595398318	1.46948989402931e-06\\
0.183622722177214	3.58684085213035e-08\\
0.107072661625942	1.08677547777119e-09\\
0.0594220932193747	8.22378373629531e-11\\
};
\addlegendentry{$p=6$}

\draw[black, thick]
(axis cs:0.22, 3.5e-2) -- 
(axis cs:0.25, 3.5e-2) -- 
(axis cs:0.25, 4.875e-2) -- cycle; 
\node[above, xshift=2pt] at (axis cs:0.265, 2.5e-2) {1};

\draw[black, thick]
(axis cs:0.22, 1.0e-2) -- 
(axis cs:0.25, 1.0e-2) -- 
(axis cs:0.25, 1.5e-2) -- cycle; 
\node[right, xshift=2pt] at (axis cs:0.25, 1.25e-2) {2};

\draw[black, thick]
(axis cs:0.22, 8.0e-4) -- 
(axis cs:0.25, 8.0e-4) -- 
(axis cs:0.25, 14.906e-4) -- cycle; 
\node[right, xshift=2pt] at (axis cs:0.25, 1.2e-3) {3};

\draw[black, thick]
(axis cs:0.22, 4.5e-5) -- 
(axis cs:0.25, 4.5e-5) -- 
(axis cs:0.25, 10.906e-5) -- cycle; 
\node[right, xshift=2pt] at (axis cs:0.25, 7.0e-5) {4};

\draw[black, thick]
(axis cs:0.22, 1.5e-6) -- 
(axis cs:0.25, 1.5e-6) -- 
(axis cs:0.25, 3.841e-6) -- cycle; 
\node[right, xshift=2pt] at (axis cs:0.25, 2.5e-6) {5};

\draw[black, thick]
(axis cs:0.22, 6.0e-8) -- 
(axis cs:0.25, 6.0e-8) -- 
(axis cs:0.25, 18.441e-8) -- cycle; 
\node[right, xshift=2pt] at (axis cs:0.25, 9.5e-8) {6};

\end{axis}
\end{tikzpicture}%

%% file: Images/h_2Dconv_eps.tex
%
%
\definecolor{mycolor1}{rgb}{0.00000,0.44700,0.74100}%
\definecolor{mycolor2}{rgb}{0.85000,0.32500,0.09800}%
\definecolor{mycolor3}{rgb}{0.92900,0.69400,0.12500}%
\definecolor{mycolor4}{rgb}{0.49400,0.18400,0.55600}%
\definecolor{mycolor5}{rgb}{0.46600,0.67400,0.18800}%
\definecolor{mycolor6}{rgb}{0.30100,0.74500,0.93300}%
\begin{tikzpicture}

\begin{axis}[%
width=4.0in,
height=2.5in,
scale only axis,
xmode=log,
xmin=0.0594220932193747,
xmax=0.330150595398318,
xminorticks=false,
xticklabel={\pgfmathparse{exp(\tick)}\pgfmathprintnumber{\pgfmathresult}},
x tick label style={
/pgf/number format/.cd, fixed, fixed zerofill,
precision=2},
xlabel style={font=\color{white!15!black}},
xlabel={$h[-]$},
ymode=log,
yminorticks=false,
ylabel style={font=\color{white!15!black}},
ylabel={$\|\hat{c}(T) - \hat{c}_h(T)\|_{\varepsilon}$},
axis background/.style={fill=white},
xmajorgrids,
ymajorgrids,
legend style={legend cell align=left, align=left, draw=white!15!black, at={(0.06,0.96)}, anchor=north west}
]
\addplot [color=mycolor1, line width=2.0pt, mark=*]
  table[row sep=crcr]{%
0.330150595398318	0.00301683753397609\\
0.183622722177214	0.00123849822247272\\
0.107072661625942	0.000533596135058147\\
0.0594220932193747	0.000246064818815475\\
};
\addlegendentry{$p=1$}

\addplot [color=mycolor2, line width=2.0pt, mark=*]
  table[row sep=crcr]{%
0.330150595398318	0.00135353936629182\\
0.183622722177214	0.00042267851585103\\
0.107072661625942	0.00013965599939964\\
0.0594220932193747	4.28716218389669e-05\\
};
\addlegendentry{$p=2$}

\addplot [color=mycolor3, line width=2.0pt, mark=*]
  table[row sep=crcr]{%
0.330150595398318	0.000177577547630903\\
0.183622722177214	2.32734462398176e-05\\
0.107072661625942	4.30121075696063e-06\\
0.0594220932193747	6.74771395018213e-07\\
};
\addlegendentry{$p=3$}

\addplot [color=mycolor4, line width=2.0pt, mark=*]
  table[row sep=crcr]{%
0.330150595398318	1.59097215769509e-05\\
0.183622722177214	1.31431916843287e-06\\
0.107072661625942	1.38960007279642e-07\\
0.0594220932193747	1.19051788956503e-08\\
};
\addlegendentry{$p=4$}

\addplot [color=mycolor5, line width=2.0pt, mark=*]
  table[row sep=crcr]{%
0.330150595398318	8.86182677637917e-07\\
0.183622722177214	3.26980519070893e-08\\
0.107072661625942	2.01273119775131e-09\\
0.0594220932193747	9.48969625000142e-11\\
};
\addlegendentry{$p=5$}

\addplot [color=mycolor6, line width=2.0pt, mark=*]
  table[row sep=crcr]{%
0.330150595398318	4.67173692996029e-08\\
0.183622722177214	1.15354318632595e-09\\
0.107072661625942	3.85215242073004e-11\\
0.0594220932193747	1.1297130180482e-11\\
};
\addlegendentry{$p=6$}

\draw[black, thick]
(axis cs:0.22, 9.95e-4) -- 
(axis cs:0.25, 9.95e-4) -- 
(axis cs:0.25, 13.875e-4) -- cycle; 
\node[right, xshift=2pt] at (axis cs:0.25, 1.8e-3) {1};

\draw[black, thick]
(axis cs:0.22, 3.5e-4) -- 
(axis cs:0.25, 3.5e-4) -- 
(axis cs:0.25, 5.25e-4) -- cycle; 
\node[right, xshift=2pt] at (axis cs:0.25, 3.5e-4) {2};

\draw[black, thick]
(axis cs:0.22, 3.1e-5) -- 
(axis cs:0.25, 3.1e-5) -- 
(axis cs:0.25, 5.206e-5) -- cycle; 
\node[right, xshift=2pt] at (axis cs:0.25, 4.5e-5) {3};

\draw[black, thick]
(axis cs:0.22, 2.0e-6) -- 
(axis cs:0.25, 2.0e-6) -- 
(axis cs:0.25, 4.001e-6) -- cycle; 
\node[right, xshift=2pt] at (axis cs:0.25, 3.0e-6) {4};

\draw[black, thick]
(axis cs:0.22, 5.5e-8) -- 
(axis cs:0.25, 5.5e-8) -- 
(axis cs:0.25, 12.641e-8) -- cycle; 
\node[right, xshift=2pt] at (axis cs:0.25, 9e-8) {5};

\draw[black, thick]
(axis cs:0.22, 2.0e-9) -- 
(axis cs:0.25, 2.0e-9) -- 
(axis cs:0.25, 5.9441e-9) -- cycle; 
\node[right, xshift=2pt] at (axis cs:0.25, 3.5e-9) {6};

\end{axis}
\end{tikzpicture}%

%% file: Images/p_convergence_2D.tex
%
%
\definecolor{mycolor1}{rgb}{0.00000,0.44700,0.74100}%
\definecolor{mycolor2}{rgb}{0.85000,0.32500,0.09800}%
\definecolor{mycolor3}{rgb}{0.92900,0.69400,0.12500}%
\begin{tikzpicture}[baseline=(current axis.south)]

\begin{axis}[%
width=4.0in,
height=2.5in,
scale only axis,
xmin=1,
xmax=8,
xtick={1, 2, 3, 4, 5, 6, 7, 8},
xlabel style={font=\color{white!15!black}},
xlabel={$p$},
ymode=log,
yminorticks=false,
ylabel style={font=\color{white!15!black}},
ylabel={Error at time $T$},
axis background/.style={fill=white},
xmajorgrids,
ymajorgrids,,
legend style={legend cell align=left, align=left, draw=white!15!black, at={(0.02,0.32)}, anchor=north west}
]
\addplot [color=mycolor1, line width=2.0pt, mark=*]
  table[row sep=crcr]{%
1	0.00169350637826497\\
2	0.000425068070102949\\
3	4.57477436973924e-05\\
4	2.89571382505904e-06\\
5	1.03419375868534e-07\\
6	3.8874174252485e-09\\
7	1.25450800543196e-10\\
8	1.1243932125128e-11\\
};
\addlegendentry{$\|\hat{c}(T)-\hat{c}_h(T)\|$}

\addplot [color=mycolor2, line width=2.0pt, mark=*]
  table[row sep=crcr]{%
1	0.0915074930734084\\
2	0.0446280440201536\\
3	0.00519769034374862\\
4	0.000508829825275142\\
5	2.54810182222433e-05\\
6	1.44617802415783e-06\\
7	5.79494616190884e-08\\
8	2.19454831912453e-09\\
};
\addlegendentry{$\|\hat{c}(T)-\hat{c}_h(T)\|_\mathrm{DG}$}

\addplot [color=mycolor3, line width=2.0pt, mark=*]
  table[row sep=crcr]{%
1	0.00291478870130802\\
2	0.00140658783425552\\
3	0.000162658630481807\\
4	1.60303002493453e-05\\
5	8.06576960518271e-07\\
6	4.5977821902064e-08\\
7	1.84667571715746e-09\\
8	7.12316358140446e-11\\
};
\addlegendentry{$\|\hat{c}(T)-\hat{c}_h(T)\|_{\varepsilon}$}

\end{axis}

\end{tikzpicture}%

%% file: Images/p_convergence_3D.tex
%
%
\definecolor{mycolor1}{rgb}{0.00000,0.44700,0.74100}%
\definecolor{mycolor2}{rgb}{0.85000,0.32500,0.09800}%
\begin{tikzpicture}

\begin{axis}[%
width=4.0in,
height=2.5in,
scale only axis,
xmin=1,
xmax=6,
xtick={1, 2, 3, 4, 5, 6},
xlabel style={font=\color{white!15!black}},
xlabel={$p$},
ymode=log,
ymin=1e-08,
ymax=1.33459,
yminorticks=false,
ylabel style={font=\color{white!15!black}},
ylabel={Errors at time $T$},
axis background/.style={fill=white},
xmajorgrids,
ymajorgrids,
legend style={legend cell align=left, align=left, draw=white!15!black, at={(0.02,0.23)}, anchor=north west}
]
\addplot [color=mycolor1, line width=2.0pt, mark=*]
  table[row sep=crcr]{%
1	0.0312593\\
2	0.00528097\\
3	0.000639199\\
4	6.26365e-05\\
5	5.18058e-06\\
6	4.03206e-07\\
};
\addlegendentry{$\|\hat{c}(T)-\hat{c}_h(T)\|$}

\addplot [color=mycolor2, line width=2.0pt, mark=*]
  table[row sep=crcr]{%
1	1.33459\\
2	0.455875\\
3	0.089213\\
4	0.0130175\\
5	0.00141795\\
6	0.000132875\\
};
\addlegendentry{$\|\hat{c}(T)-\hat{c}_h(T)\|_\mathrm{DG}$}

\end{axis}
\end{tikzpicture}%

%% file: Images/h_convergence_3D_L2.tex
%
%
\definecolor{mycolor1}{rgb}{0.00000,0.44700,0.74100}%
\definecolor{mycolor2}{rgb}{0.85000,0.32500,0.09800}%
\definecolor{mycolor3}{rgb}{0.92900,0.69400,0.12500}%
\definecolor{mycolor4}{rgb}{0.49400,0.18400,0.55600}%

\begin{tikzpicture}

\begin{axis}[%
width=4.0in,
height=2.5in,
scale only axis,
xmode=log,
xmin=0.174792,
xmax=0.452361,
xminorticks=false,
xlabel style={font=\color{white!15!black}},
xlabel={$h[-]$},
xticklabel={\pgfmathparse{exp(\tick)}\pgfmathprintnumber{\pgfmathresult}},
x tick label style={
/pgf/number format/.cd, fixed, fixed zerofill,
precision=2},
ymode=log,
ymin=1e-07,
ymax=0.1,
yminorticks=false,
ylabel style={font=\color{white!15!black}},
ylabel={$\|\hat{c}(T) - \hat{c}_h(T)\|$},
axis background/.style={fill=white},
xmajorgrids,
ymajorgrids,
legend style={legend cell align=left, align=left, draw=white!15!black, at={(0.98,0.02)}, anchor=south east}
]

\addplot [color=mycolor1, line width=1.5pt, mark=*]
  table[row sep=crcr]{%
0.452361	0.0312593\\
0.354585	0.0196621\\
0.279315	0.0121113\\
0.226138	0.00763603\\
0.174792	0.00476778\\
};
\addlegendentry{$p=1$}

\addplot [color=mycolor2, line width=1.5pt, mark=*]
  table[row sep=crcr]{%
0.452361	0.00528097\\
0.354585	0.00249121\\
0.279315	0.00120292\\
0.226138	0.000588376\\
0.174792	0.000286297\\
};
\addlegendentry{$p=2$}

\addplot [color=mycolor3, line width=1.5pt, mark=*]
  table[row sep=crcr]{%
0.452361	0.000639199\\
0.354585	0.000231137\\
0.279315	8.79917e-05\\
0.226138	3.35421e-05\\
0.174792	1.28347e-05\\
};
\addlegendentry{$p=3$}

\addplot [color=mycolor4, line width=1.5pt, mark=*]
  table[row sep=crcr]{%
0.452361	6.26365e-05\\
0.354585	1.71494e-05\\
0.279315	5.12613e-06\\
0.226138	1.52296e-06\\
0.174792	4.65158e-07\\
};
\addlegendentry{$p=4$}

\draw[black, thick]
(axis cs:0.22, 5.5e-3) -- 
(axis cs:0.25, 5.5e-3) -- 
(axis cs:0.25, 7.875e-3) -- cycle; 
\node[right, xshift=2pt] at (axis cs:0.25, 6.18e-3) {2};

\draw[black, thick]
(axis cs:0.22, 4.0e-4) -- 
(axis cs:0.25, 4.0e-4) -- 
(axis cs:0.25, 6.25e-4) -- cycle; 
\node[right, xshift=2pt] at (axis cs:0.25, 5.0e-4) {3};

\draw[black, thick]
(axis cs:0.22, 2.0e-5) -- 
(axis cs:0.25, 2.0e-5) -- 
(axis cs:0.25, 3.906e-5) -- cycle; 
\node[right, xshift=2pt] at (axis cs:0.25, 2.8e-5) {4};

\draw[black, thick]
(axis cs:0.22, 1.0e-6) -- 
(axis cs:0.25, 1.0e-6) -- 
(axis cs:0.25, 2.441e-6) -- cycle; 
\node[right, xshift=2pt] at (axis cs:0.25, 1.56e-6) {5};

\end{axis}
\end{tikzpicture}

%% file: Images/h_convergence_3D_DG.tex
%
%
\definecolor{mycolor1}{rgb}{0.00000,0.44700,0.74100}%
\definecolor{mycolor2}{rgb}{0.85000,0.32500,0.09800}%
\definecolor{mycolor3}{rgb}{0.92900,0.69400,0.12500}%
\definecolor{mycolor4}{rgb}{0.49400,0.18400,0.55600}%
\begin{tikzpicture}

\begin{axis}[%
width=4.0in,
height=2.5in,
scale only axis,
xmode=log,
xmin=0.174792,
xmax=0.452361,
xminorticks=false,
xlabel style={font=\color{white!15!black}},
xlabel={$h[-]$},
xticklabel={\pgfmathparse{exp(\tick)}\pgfmathprintnumber{\pgfmathresult}},
x tick label style={
/pgf/number format/.cd, fixed, fixed zerofill,
precision=2},
ymode=log,
ymin=0.0001,
yminorticks=false,
ylabel style={font=\color{white!15!black}},
ylabel={$\|\hat{c}(T) - \hat{c}_h(T)\|_\mathrm{DG}$},
axis background/.style={fill=white},
xmajorgrids,
ymajorgrids,
legend style={legend cell align=left, align=left, draw=white!15!black, at={(0.98,0.02)}, anchor=south east}
]
\addplot [color=mycolor1, line width=2.0pt, mark=*]
  table[row sep=crcr]{%
0.452361	1.33459\\
0.354585	1.07446\\
0.279315	0.838512\\
0.226138	0.668122\\
0.174792	0.524799\\
};
\addlegendentry{$p=1$}

\addplot [color=mycolor2, line width=2.0pt, mark=*]
  table[row sep=crcr]{%
0.452361	0.455875\\
0.354585	0.272993\\
0.279315	0.165097\\
0.226138	0.100562\\
0.174792	0.0610858\\
};
\addlegendentry{$p=2$}

\addplot [color=mycolor3, line width=2.0pt, mark=*]
  table[row sep=crcr]{%
0.452361	0.089213\\
0.354585	0.0415502\\
0.279315	0.0197975\\
0.226138	0.00960428\\
0.174792	0.00461541\\
};
\addlegendentry{$p=3$}

\addplot [color=mycolor4, line width=2.0pt, mark=*]
  table[row sep=crcr]{%
0.452361	0.0130175\\
0.354585	0.00458739\\
0.279315	0.00169237\\
0.226138	0.000623388\\
0.174792	0.000223727\\
};
\addlegendentry{$p=4$}

\draw[black, thick]
(axis cs:0.22, 5.5e-1) -- 
(axis cs:0.25, 5.5e-1) -- 
(axis cs:0.25, 6.5e-1) -- cycle; 
\node[right, xshift=2pt] at (axis cs:0.25, 5.5e-1) {1};

\draw[black, thick]
(axis cs:0.22, 7.0e-2) -- 
(axis cs:0.25, 7.0e-2) -- 
(axis cs:0.25, 10.25e-2) -- cycle; 
\node[right, xshift=2pt] at (axis cs:0.25, 8.0e-2) {2};

\draw[black, thick]
(axis cs:0.22, 7.0e-3) -- 
(axis cs:0.25, 7.0e-3) -- 
(axis cs:0.25, 11.906e-3) -- cycle; 
\node[right, xshift=2pt] at (axis cs:0.25, 8.0e-3) {3};

\draw[black, thick]
(axis cs:0.22, 4.0e-4) -- 
(axis cs:0.25, 4.0e-4) -- 
(axis cs:0.25, 8.441e-4) -- cycle; 
\node[right, xshift=2pt] at (axis cs:0.25, 5.0e-4) {4};

\end{axis}
\end{tikzpicture}%